\numberwithin{equation}{section}
\newtheorem{theorem}{Theorem}
\newtheorem{lemma}[theorem]{Lemma}
\newtheorem{proposition}[theorem]{Proposition}
 \newtheorem{corollary}[theorem]{Corollary}
      \theoremstyle{definition}
       \newtheorem*{definition}{Definition}
     \newtheorem{example}[theorem]{Example}
     \theoremstyle{remark}
     \newtheorem{remark}[theorem]{Remark}
\newcommand{\Sym}{\mathop{\mathrm{Sym}}}
\newcommand{\Alt}{\mathop{\mathrm{Alt}}}
 \definecolor{mycolor}{rgb}{0.55,0.0,0.16}
  \definecolor{myred}{rgb}{0.6,0.0,0.16}
  \definecolor{mygreen}{rgb}{0.0,0.6,0.16}
  \definecolor{myviolet}{rgb}{1,0,1}
\begin{document}
 \title[The growth of abelian sections]{The growth of abelian sections}
\author[L. Sabatini]{Luca Sabatini}

\address{Luca Sabatini, Alfr\'ed R\'enyi Institute of Mathematics, Hungarian Academy of Sciences\newline
Re\'altanoda utca 13-15, H-1053, Budapest, Hungary} 

\email{sabatini@renyi.hu, sabatini.math@gmail.com}
\subjclass[2020]{Primary 20F69, Secondary 20C15.}
\keywords{Abelian sections; representation growth.}        
	\maketitle

        \begin{abstract}
        Given an abstract group $G$, we study the function $ab_n(G) := \sup_{|G:H| \leq n} |H/[H,H]|$.
        If $G$ has no abelian composition factors, then $ab_n(G)$ is bounded by a polynomial:
        as a consequence, we find a sharp upper bound for the representation growth of these groups.
          \end{abstract}
          
          \vspace{0.2cm}
\section{Introduction}

	 Let $G$ be an abstract group.
	 There are many functions in group theory that describe $G$ starting ``from the top'',
	 the most famous probably being the subgroup growth \cite{2002LS},
	 which counts the number of subgroups of any given index.
	 The main goal of this article is to study the abelian quotients of these subgroups,
	 i.e. the function
 $$ ab_n(G) \> := \> \sup_{|G:H| \leq n} |H/H'| \> , $$   
     where $H':=[H,H]$ is the commutator subgroup of $H$.
     We call it the {\bfseries abelianization growth} of $G$.
     As we will see, this is really a matter of finite groups,
     and it is quite related to the representation theory of $G$.
	 Let us define
   $Rep_n(G)$ as the number of (pairwise inequivalent) irreducible complex $G$-representations of degree {\itshape at most} $n$ and finite image.
     We will refer to this function as the {\bfseries representation growth} of $G$.
     We remark that the usual definition of $Rep_n(G)$
     involves representations with infinite image as well:
     for technical reasons, we will consider only finite images.
Representation growth was introduced in the early 2000s,
and was especially studied for arithmetic groups (we refer the reader to the survey \cite{2013Klopsch}).
On the other hand, the quantitative study of abelian sections has emerged recently.
Perhaps the first question one can ask about $ab_n(G)$ is whether it really {\itshape grows}.
With some effort made in Section \ref{sect2},
results from \cite{2021SabA,2021SabB} give the following tight bound
(all the logarithms in this paper are to base $2$):

 \begin{theorem} \label{thLBAbG}
     If $G$ is any residually finite infinite group, then
     $$ ab_{n}(G) \> \geq \> n^{\> 1/32 \log \log n} $$
     for infinitely many $n$.\\
     On the other hand, there exists a residually finite infinite group $G$ such that
     $$ ab_{n}(G) \> \leq \> n^{\> \beta/\log \log n} $$
     for some fixed $\beta>0$ and every $n \geq 3$.
     \end{theorem}
     \vspace{0.1cm}
     
Sections \ref{sect3} and \ref{sect4} concern both $ab_n(G)$ and $Rep_n(G)$.
As it is a common observation that $ab_1(G) = Rep_1(G)$ when $G$ is finite,
 it happens that $ab_n(G)$ and $Rep_n(G)$ are still related at larger scales.
  Indeed, \cite[Lemma 2.6 (a)]{2004LM} (where $ab_n(G)$ appears in passing) provides
  \begin{equation} \label{eqLM}
  ab_n(G) \> \leq \> n \cdot Rep_n(G) 
\end{equation}   
  for every finite group $G$ and $n \geq 1$. On the other hand, we show

 \begin{proposition} \label{propRGBAb} 
     There exists an absolute constant $C>0$ such that if $G$ is $d$-generated for some $d \geq 1$, then
     $$ Rep_n(G) \> \leq \> (n! \cdot ab_{(n+1)!}(G))^{ \> Cd \> \cdot \> (\log ab_{(n+1)!}(G) \> + \> n \log n)} $$
     for every $n \geq 1$.
     \end{proposition}
     \vspace{0.1cm}
     
     Moreover, we construct residually finite infinite groups with arbitrarily fast
     abelianization and representation growths (Proposition \ref{propFastAG}).
     We move to study more in detail the groups without abelian composition factors.
     
     \begin{theorem} \label{thPAGoSG}
     There exists an absolute constant $\alpha>0$ such that the following holds.
    Let $C \geq 1$, $D \geq 0$. Let $G$ be a group with $D$ abelian composition factors,
    and suppose that all of them have order at most $C$.
   Then
   $$ ab_n(G) \> \leq \> C^D \cdot n^\alpha $$
   for every $n \geq 1$.
     \end{theorem}
 \vspace{0.1cm}
 
 This result depends on the classification of finite simple groups.
 One construction of Kassabov and Nikolov \cite{2006KN} provides a finitely generated residually finite group
 without abelian composition factors and representation growth of factorial type (see Example \ref{exKN}).
Thus $ab_n(G)$ and $Rep_n(G)$ can be quite different in general.
In the range of $d$-generated cartesian products without abelian composition factors,
they define {\itshape admissible} \cite[Theorem 1.8]{2006KN} a representation growth
which is bounded below a factorial.
We prove that the same bound holds for all groups without abelian composition factors.

\begin{theorem} \label{thKN}
There exists an absolute constant $\alpha'>0$ such that,
if $G$ is a $d$-generated group without abelian composition factors, then
$$ Rep_n(G) \> \leq \> (n!)^{\alpha' d} $$
for every $n \geq 1$.
\end{theorem}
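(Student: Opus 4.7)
The plan is to bypass Proposition~\ref{3thRGBAb} (which, combined with the polynomial bound of Theorem~\ref{3thPAGoSG}, would only produce the weak estimate $Rep_n(G) \leq \exp(O(d n^2 \log^2 n))$) and argue directly via Jordan's theorem together with a structural count based on the classification of finite simple groups.

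The crucial observation is that if $G$ has no abelian composition factors then neither does any finite quotient $\bar G = G/K$, so such a $\bar G$ has no non-trivial abelian normal subgroup (any such would contribute a prime-cyclic composition factor). For each irreducible representation $\rho : G \to GL_n(\mathbb{C})$ with finite image, Jordan's theorem in Collins' sharp form $J(n) \leq (n+1)!$ provides an abelian normal subgroup of $\bar G := \rho(G)$ of index at most $(n+1)!$; this subgroup must therefore be trivial, and we conclude $|\bar G| \leq (n+1)!$. Hence every representation counted by $Rep_n(G)$ factors through a finite quotient $G/K$ of order at most $(n+1)!$ without abelian normal subgroup.

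Grouping representations by their kernel and applying the elementary bound $|\mathrm{Irr}(G/K)| \leq |G/K|$ (the number of irreducible characters of a finite group equals its number of conjugacy classes) yields
$$ Rep_n(G) \; \leq \; (n+1)! \cdot |\mathcal{K}|, $$
where $\mathcal{K}$ is the set of normal subgroups $K \lhd G$ with $[G:K] \leq (n+1)!$ and $G/K$ without non-trivial abelian normal subgroup. Each such $G/K$ embeds in $\mathrm{Aut}(\mathrm{Soc}(G/K))$ whose socle is a direct product of non-abelian finite simple groups. A careful enumeration based on CFSG (and Schreier's theorem on the solvability of $\mathrm{Out}(S)$), together with the fact that each $G/K$ is $d$-generated, shows that the total number of surjective homomorphisms $G \twoheadrightarrow L$ ranging over finite groups $L$ of order at most $m$ without abelian normal subgroup is at most $m^{Cd}$ for some absolute constant $C$; each $K \in \mathcal{K}$ arises from such a surjection, so $|\mathcal{K}| \leq ((n+1)!)^{Cd}$.

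Combining and using $(n+1)! \leq (n!)^2$ for $n \geq 2$ (the case $n = 1$ is immediate since $G$ is perfect and thus has only the trivial linear character) delivers $Rep_n(G) \leq (n!)^{\alpha' d}$ for $\alpha' := 2(C+1)$. The main obstacle is the structural enumeration: a generic Pyber-style bound on finite groups of bounded order gives only $m^{O(\log m)}$ isomorphism classes, which would destroy the final estimate. Obtaining the polynomial $m^{Cd}$, which is what drives the proof, requires the CFSG-based analysis of finite groups without abelian normal subgroup, matching the classification dependence already noted for Theorem~\ref{3thPAGoSG}.
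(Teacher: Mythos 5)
Your argument is correct, and it uses the same two main pillars as the paper's proof — Jordan's theorem in Collins' form $j(n)\leq(n+1)!$, and the Pyber--Jaikin-Zapirain polynomial enumeration of $d$-generated finite groups without abelian composition factors (\cite[Corollary~13.4]{2011PJZ}), which is what replaces the quasi-polynomial Lubotzky count and drives the final bound. Where you genuinely diverge is in how you bound $|\rho(G)|$. The paper writes $|\rho(G)|\leq j(n)\cdot ab_{j(n)}(G)$ and then invokes Theorem~\ref{3thPAGoSG} (hence the Liebeck--Shalev zeta-function estimates) to control $ab_{j(n)}(G)$ by a polynomial in $j(n)$. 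You instead observe that since $G$, and therefore $\rho(G)$, has no abelian composition factors, the abelian normal subgroup supplied by Jordan's theorem must be trivial, so $|\rho(G)|\leq j(n)\leq(n+1)!$ outright. This is a real simplification: it decouples Theorem~\ref{thKN} from Theorem~\ref{3thPAGoSG}, and in particular from the character-degree machinery, leaving the only CFSG dependence in the \cite{2011PJZ} enumeration. Your subsequent accounting (group representations by kernel; $|\mathrm{Irr}(G/K)|\leq|G/K|$; bound the number of kernels by the number of surjections onto the at most $m^{Cd}$ isomorphism types times $m^d$ surjections each) is equivalent to the paper's factorization into $\phi_d(m)\cdot m^{d+1}$.

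Two small remarks. First, the condition ``$G/K$ without non-trivial abelian normal subgroup'' in your definition of $\mathcal{K}$ is automatic once $G$ has no abelian composition factors, and you should phrase it as ``$G/K$ without abelian composition factors'' to match the hypotheses of \cite[Corollary~13.4]{2011PJZ}; you would do better to cite that corollary explicitly rather than sketch a socle/CFSG argument, since reproducing the polynomial bound is nontrivial. Second, $(n+1)!\leq(n!)^2$ fails at $n=2,3$ (e.g.\ $6>4$); replace it with $(n+1)!\leq(n!)^{3}$ for $n\geq 2$, which is harmless since the exponent is absorbed into $\alpha'$, and handle $n=1$ as you did.
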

\vspace{0.1cm}

The proof uses $ab_n(G) \leq n^\alpha$,
the Jordan theorem on finite subgroups of $GL_n(\mathbb{C})$,
and that the number of boundedly generated groups of order $n$ without abelian composition factors is polynomially bounded \cite{2011JZP}.
In Section \ref{sect5} we generalize $ab_n(G)$ and $Rep_n(G)$ to transitive group actions,
in place of the regular action of $G$ on itself:
while (\ref{eqLM}) resists in the generalized setting,
the same is not true for any inequality in the direction of Proposition \ref{propRGBAb}.

\vspace{0.2cm}
 \begin{section}{Basic properties of $ab_n(G)$}  \label{sect2}
     
     By definition we have $ab_1(G)=|G/G'|$. We accept infinite values for $ab_n(G)$,
   	and if $G$ is finite, then $ab_{|G|}(G)$ is the size of the largest abelian section of $G$.
    One might wonder whether ``$\sup$'' is often unnecessary in the definition of $ab_n(G)$, and ``$\max$'' is acceptable.
    A satisfactory answer comes from the starting point of the theory of subgroup growth,
    i.e. the fact that a finitely generated group has finitely many subgroups of index $n$,
    for every fixed $n \geq 1$.
    More precisely, the following result,
    where $Sub_n(G)$ denotes the number of subgroups of index {\itshape at most} $n$ in $G$,
    can be deduced from \cite[Corollary 3.4]{2002LS}.
    
     \begin{lemma} \label{lemSubGrowth}
    Let $G$ be a $d$-generated group, for some $d \geq 1$.
    Then $Sub_n(G)$ is finite for every $n \geq 1$, and
    $$ Sub_n(G) \> \leq \> (n!)^{d} $$
    for all $n \geq 4$.
    Moreover, if $G$ is solvable, then
    $$ Sub_n(G) \> \leq \> 4^{d n} $$
    is true for all sufficiently large $n$.
    \end{lemma}
    \vspace{0.1cm}
     
     We develop the theory of abelianization growth in a general context,
     and use the hypothesis of finite generation only when really needed.
     
     \begin{remark} \label{remHZentrum}
     If $Z(G)$ denotes the center of $G$, then $(HZ(G))'=H'$ for every $H \leqslant G$.
     Thus $ab_n(G)$ is determined by some subgroup which contains the center, for every $n \geq 1$.
     \end{remark}
     \vspace{0.1cm}
      
      The following are easily checked (slightly more general results will be proved in Section \ref{sect5}).
      
       \begin{lemma}[Hereditary properties] \label{lemAbGSub}
     Let $H \leqslant G$ and $n \geq 1$. We have
     \begin{itemize}
     \item[(i)] $ab_n(H) \leq ab_{|G:H|n}(G)$;
        \item[(ii)] $ab_n(G) \leq |G:H| \cdot ab_n(H)$.
     \end{itemize}
     Moreover, if $N \lhd G$, then $ab_n(G/N) \leq ab_n(G)$ for all $n \geq 1$.
     \end{lemma}
     \vspace{0.1cm}

     \begin{subsection}{Residually finite and profinite groups} \label{subsec2.1}
     
   	Let $\mathcal{G}$ be a family of finite groups. We define
   	$$ ab_n(\mathcal{G}) \> := \> \sup_{G \in \mathcal{G}} \hspace{0.1cm} ab_n(G) \> . $$ 
     It is obvious that $ab_n(\mathcal{G}) \leq ab_n(\mathcal{F})$ for all $n \geq 1$ if 
     $\mathcal{G}$ and $\mathcal{F}$ are two families of finite groups and $\mathcal{G} \subseteq \mathcal{F}$.
     We recall a definition, and introduce its finitary version.
     
     \begin{definition}
     A (typically infinite) group $G$ has {\bfseries FAb} ({\bfseries Finite Abelianizations})
      if $|H/H'|$ is finite for every subgroup $H \leqslant G$ of finite index.\\
     A family of finite groups $\mathcal{G}$ has {\bfseries BAb} ({\bfseries Bounded Abelianizations})
     if $ab_n(\mathcal{G})$ is finite for all $n \geq 1$.
     \end{definition}
     \vspace{0.1cm}

     We stress that the definition of FAb is slightly weaker than the property of having $ab_n(G)<\infty$ for every $n \geq 1$.
     Indeed, the two are equivalent when $G$ is finitely generated.
     The following is useful to construct groups with FAb.
     
      \begin{lemma} \label{lemPerfFAb}
     If $(G_k)_{k \geq 1}$ is a sequence of finite perfect groups,
     then the cartesian product $G := \prod_{k=1}^{\infty} G_k$ has FAb.
     \end{lemma}
     \begin{proof}
    Let $H \leqslant G$ be a subgroup of finite index.
    Let $I_H$ be the set of indices $k$ such that
    $H$ intersects the direct factor $G_k$ properly.
    Of course, $H$ is contained in $\prod_{k \notin I_H} G_k \times \prod_{k \in I_H} M_k$, where $M_k<G_k$ are maximal subgroups.
    Since $H$ has finite index, $I_H$ is a finite set.
    Then, we use the fact that the $G_k$'s are perfect to write $|H/H'| \leq \prod_{k \in I_H} |M_k| < \infty$.
     \end{proof}
     \vspace{0.1cm}
     
     The following group has FAb, but $ab_{60}(G)=\infty$.
     
     \begin{example} \label{exEx}
     Let $p \geq 2$ be a prime, let $\mathbb{F}_p$ be the field with $p$ elements,
     and consider the permutational wreath product $(\mathbb{F}_p)^5 \rtimes \Alt(5)$.
     The abelianization of this group has size $p$, and it is unbounded.
     We overcome this problem by considering the subspace of $(\mathbb{F}_p)^5$
     which is constituted by the vectors whose coordinates sum to zero.
     This is a $4$-dimensional space, the so-called {\itshape deleted permutation module}, and let us denote it by $V_p$.
     We restrict the semidirect product to this subspace, and define
      $$ G_p \> := \> V_p \rtimes \Alt(5) \>, $$
      where the action is the one described above.
       Let $G:=\prod_{p \mbox{\tiny{ prime}}} G_p$ be the cartesian product of the $G_p$'s.
     Since $G_p$ is perfect for every $p$, $G$ has FAb from Lemma \ref{lemPerfFAb}.
     On the other hand, for every $p$, it is easy to find a subgroup of $G$ of index $|\Alt(5)|=60$,
     whose abelianization is precisely $V_p$.
     \end{example}
     \vspace{0.1cm}
     
     A stronger condition assures that $ab_n(G)$ is finite for every $n \geq 1$.
     
      \begin{lemma} \label{lemPerfBAb}
     Let $(G_k)_{k \geq 1}$ be a sequence of finite perfect groups with BAb.
   If $G:=\prod_{k=1}^{\infty} G_k$, then $ab_n(G)$ is finite for every $n \geq 1$.
     \end{lemma}
     \begin{proof}
     Let $\mathcal{G}:=(G_k)_{k \geq 1}$, and let $H \leqslant G$ be a subgroup of finite index.
    Arguing as in the proof of Lemma \ref{lemPerfFAb}, we see that $I_H$ is a finite set.
    Moreover $2^{|I_H|} \leq \prod_{k \in I_H} |G_k:M_k| \leq |G:H|$, and so $|I_H| \leq \log |G:H|$.
    Since the $G_k$'s are perfect, for every $n \geq 1$ we have
     \begin{align*}
ab_n(G)  & \> = \> 
\sup_{|G:H| \leq n} |H/H'| \\ & \> \leq \> 
 ( \sup_{G_k \in \mathcal{G}} ab_n(G_k) )^{|I_H|}  \\ & \> \leq \> 
( ab_n(\mathcal{G}) )^{\log n} \> , 
\end{align*}
and the conclusion follows because $\mathcal{G}$ has BAb.
     \end{proof}
     \vspace{0.1cm}
     
   	Now we see how $ab_n(G)$ is related to the finite quotients of $G$.
     
     \begin{proposition}[Reduction to finite groups] \label{propFAbBAb}
     Let $G$ be an arbitrary group,
     and let $\mathcal{G}$ be the family of its finite quotients.
     Then 
     $$ ab_n(\mathcal{G}) \> \leq \> ab_n(G) $$
     for all $n \geq 1$.
     Moreover, equality holds in the following cases:
     
     \begin{itemize}
       \item[(i)] $G$ has FAb;
     \item[(ii)] $G$ is finitely generated and residually finite
     (so in this case $\mathcal{G}$ has BAb if and only if $G$ has FAb).
     \end{itemize}
     
     \end{proposition}
     \begin{proof}
      From the last part of Lemma \ref{lemAbGSub}, for every $n \geq 1$,
     $$ ab_n(\mathcal{G}) = \sup_{G/N \in \mathcal{G}} ab_n(G/N) \leq \sup_{G/N \in \mathcal{G}} ab_n(G) = ab_n(G) . $$
      Now let $G$ have FAb, and let $H \leqslant G$ be a subgroup of finite index. It follows that $|G:H'|=|G:H||H:H'|$ is finite.
     If $N := \cap_{x \in G} (H')^x \leqslant H'$ is the normal core of $H'$ in $G$, we have $|G:N|<\infty$ and
     $$ |H/H'| = |(H/N):(H/N)'| , \hspace{0.5cm} \mbox{and} \hspace{0.5cm} |G:H| = |(G/N):(H/N)| . $$
     Since $H/N \leqslant G/N \in \mathcal{G}$, we obtain $ab_n(G) \leq ab_n(\mathcal{G})$.\\
     Finally, let $G$ be finitely generated and residually finite.
     If $G$ has FAb, then the conclusion follows from (i), so suppose that $G$ does not have FAb.
     Since $G$ is finitely generated, there are only finitely many subgroups for any given index.
     Then there exists a minimal integer, say $m$,
   	 such that $G$ has a subgroup of index $m$ with infinite abelianization.
     Hence $ab_{m-1}(G) <\infty$, while $ab_m(G)=\infty$. For all $n<m$, we have
     $ab_n(\mathcal{G}) = ab_n(G)$ by the same argument we used above.
     It remains to prove that $ab_m(\mathcal{G})=\infty$.
     Let $H \leqslant G$ be a subgroup of index $m$ and infinite abelianization.
     Since $H/H'$ is infinite and residually finite, there exists a subgroup $J/H' \leqslant H/H'$ of finite but arbitrarily large index $|H:J|$.
     Passing to the normal core $N:=\cap_{x \in G} J^x$,
     there exists a normal subgroup $N \lhd G$ inside $J$.
    
  \begin{figure}[h]
\centering
\begin{tikzpicture}[node distance=1cm]

\node(G)                           {$G$};
\node(H)       [below of=G] {$H$};
\node(J)      [below of=H]  {$J$};
\node(der) [below left of=J] {$N$};
\node(N) [below right of=J] {$H'$};

\draw (G) -- (H);
\draw (H) -- (J);
\draw (J) -- (der);
\draw (J) -- (N);

\end{tikzpicture}
\end{figure}
     
     Consider $G/N \in \mathcal{G}$, and $H/N \leqslant G/N$ as a subgroup of index $m$. We have
     $$ |(H/N):(H/N)'| = |H:NH'| \geq |H:J| . $$
     Since $|H:J|$ is arbitrarily large, the result follows.
     \end{proof}
     \vspace{0.1cm}
     
   Three remarks about Proposition \ref{propFAbBAb}:
     
     \begin{itemize}
     \item the hypotheses in (i) and (ii) are necessary.
     If $G \cong \mathbb{Q}$, the additive group of the rationals,
     then $\mathcal{G}=\{ 1 \}$, while $ab_n(G)=\infty$ for all $n \geq 1$;
     \item essentially the same proof works if $G$ is the inverse limit of the inverse system $\mathcal{G}$;
   	\item if $G$ satisfies one among (i)-(ii), then one can take the profinite completion
   	$\widetilde{G}$ and conclude that $ab_n(G)=ab_n(\widetilde{G})$ for all $n \geq 1$.
     However, we stress that the approach with families is more general,
     because not all the families of finite groups arise as
     finite quotients of an infinite mother group.
     As we will see with Proposition \ref{propCraven}, not all the techniques of profinite groups work for families.
     \end{itemize}
     \vspace{0.1cm}
     
     For the proof of Theorem \ref{thLBAbG}, we will use a number of known results.
     We start with the following two.
     
     \begin{theorem}[Theorem 1 in \cite{2021SabA}] \label{thProc}
    Every finite group $G$ of size at least $3$ contains an abelian section of size at least $|G|^{1/32 \log\log|G|}$.
     \end{theorem}
     
     \begin{theorem}[Theorem 1 in \cite{2021SabB}] \label{thArkMath}
     Let $\mathcal{G}:=(\Sym(k))_{k \geq 5}$.
     Then there exists an absolute constant $\alpha>0$ such that
     $$ ab_n(\mathcal{G}) \> \leq \> n^{\> \alpha/\log\log n} $$
     for every $n \geq 3$.
     \end{theorem}
     \vspace{0.1cm}
     
     From Lemma \ref{lemAbGSub} (i), it is easy to see that Theorem \ref{thArkMath} remains true with $\Alt(k)$ in place of $\Sym(k)$.
     The next result is very useful, especially when combined with Proposition \ref{propFAbBAb}.
     
     \begin{theorem}[Theorem 1.2 in \cite{2006KN}] \label{thKN}
     If a cartesian product of alternating groups $\prod_{k=5}^{\infty} \Alt(k)^{f(k)}$ is topologically finitely generated,
     then it is the profinite completion of a finitely generated residually finite group.
     \end{theorem}
     \vspace{0.1cm}
     
     We also need a numerical claim.
     
     \begin{lemma} \label{lemNum}
     If $x$ and $y$ are real numbers and $8 \leq x \leq y$, then
     $$ \frac{x}{\log x} + \frac{ y}{2 \log y} \> \leq \> \frac{x+y}{\log(x+y)} \> . $$
     \end{lemma}
     \begin{proof}
     We have equality when $x=y=8$, and it is easy to check the case $x=y \geq 8$.
     Moreover, for every fixed $x$, the function
     $f(y) := \frac{x+y}{\log(x+y)} - \frac{ y}{2 \log y}$ is not-decreasing.
     \end{proof}
     \vspace{0.1cm}
    
      \begin{proof}[Proof of Theorem \ref{thLBAbG}]
      The lower bound for $ab_n(G)$ follows from Lemma \ref{lemAbGSub} and Theorem \ref{thProc}.
      In fact, for every $N \lhd G$ of finite index one has
      $$ ab_{|G/N|}(G) \>\geq \> ab_{|G/N|}(G/N) \> \geq \> |G/N|^{1/32\log\log|G/N|} \> . $$
      If $G$ is residually finite, then there exist normal subgroups of finite but arbitrarily large index.\\
      To prove the second part, we construct a residually finite infinite group which attains the bound of the symmetric groups
      (this is not obvious, because there is no infinite group whose finite quotients are precisely the symmetric groups).
      Let $(a_j)_{j \geq 1}$ be a sequence of positive integers to be fixed later, and let $G := \prod_{j=1}^{\infty} \Alt(a_j)$.
      We remark immediately that $G$ has FAb from Lemma \ref{lemPerfFAb}.
      We will show that $ab_n(G) \leq n^{\beta/\log\log n}$ for all $n \geq 3$.
      From Theorem \ref{thKN},
      $G$ is the profinite completion of a finitely generated residually finite group,
      and so the proof would follow from Proposition \ref{propFAbBAb}.
      To estimate $ab_n(G)$, we work with its finite quotients.
      Indeed $G$ is the inverse limit of $(G_k)_{k \geq 1}$, where $G_k := \prod_{j=1}^k \Alt(a_j)$ for all $k \geq 1$.
      From Proposition \ref{propFAbBAb} (i), it is enough to prove that for every $k \geq 0$ and $H \leqslant G_{k+1}$ one has
      \begin{equation} \label{eqTh}
       \log|H/H'| \> \leq \> \beta \cdot \frac{\log|G_{k+1}:H|}{\log\log|G_{k+1}:H|} \> , 
	\end{equation}      
      where $\beta:=2 \alpha$, and $\alpha>0$ is the constant in Theorem \ref{thArkMath}.
      We accomplish this task by induction on $k$.
      Arguing as in \cite[Sect. 4]{2021SabB}, we can suppose
     $H=A \times B$ for some $A \leqslant G_k$ and $B \leqslant \Alt(a_{k+1})$.
     By the inductive hypothesis we have
     \begin{align*}
\log|H/H'| & \> = \> 
\log|A/A'| + \log|B/B'| \\ & \> \leq \> 
\beta \frac{\log|G_k:A|}{\log\log|G_k:A|} + \alpha \frac{\log(|\Alt(a_{k+1})|/|B|)}{\log\log(|\Alt(a_{k+1})|/|B|)} \> .
\end{align*}
 Since $(\Alt(a_j))_{j \geq 1}$ has BAb, $(G_k)_{k \geq 1}$ has BAb from Lemma \ref{lemPerfBAb} and Proposition \ref{propFAbBAb}.
     Therefore, we can assume that $|G_k:A|$ is at least $2^8$
     (otherwise the first term is bounded by an absolute constant).
Moreover, since $\Alt(a_{k+1})$ is perfect, we can assume $|\Alt(a_{k+1})|/|B| \geq a_{k+1}$.
Let $x:=\log|G_k:A|$ and $y:=\log(|\Alt(a_{k+1})|/|B|)$.
      Now, if $(a_j)_{j \geq 1}$ is sufficiently fast, then the condition $8 \leq x \leq y$ is satisfied,
      and so (\ref{eqTh}) follows from Lemma \ref{lemNum}.
      \end{proof}
      \vspace{0.1cm}
      
       Here is a characterization of BAb.
       Although we will not use it, it can be interesting in itself.
     
     \begin{lemma} \label{lemEquivBAb}
     Let $\mathcal{G}$ be a family of finite groups. The following are equivalent:
     
     \begin{itemize}
     \item[(i)] for every {\bfseries normal} subgroup $N \lhd G \in \mathcal{G}$, $|N/N'|$ can be bounded just from $|G:N|$;
     \item[(ii)] $\mathcal{G}$ has BAb;
     \item[(iii)] every solvable section $H/N$ of every $G \in \mathcal{G}$ has size bounded just from $|G:H|$ and its derived length.
     \end{itemize}
     
     \end{lemma}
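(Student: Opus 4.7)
The plan is to close a short cycle of implications centered on (ii): namely $(ii) \Rightarrow (i)$, $(i) \Rightarrow (ii)$, $(ii) \Rightarrow (iii)$, and $(iii) \Rightarrow (ii)$. Two of these are essentially free. For $(ii) \Rightarrow (i)$, a normal subgroup $N \lhd G$ of index $n$ is in particular a subgroup of index $n$, so $|N/N'| \leqslant ab_n(\mathcal{G})$, a function of $n$ alone. For $(iii) \Rightarrow (ii)$, given any $H \leqslant G$ with $|G:H| \leqslant n$, the quotient $H/H'$ is abelian, hence a solvable section of derived length at most $1$, so (iii) already bounds $|H/H'|$ purely in terms of $n$.

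For $(i) \Rightarrow (ii)$ the key is the core trick. Given $H \leqslant G$ with $|G:H| \leqslant n$, replace $H$ by its normal core $H_G = \bigcap_{g \in G} H^g \lhd G$, which satisfies $|G:H_G| \leqslant n!$. By (i), $|H_G/H_G'|$ is bounded by some function $f(n!)$. Since $H_G \leqslant H$ we have $H_G' \leqslant H'$, and the short computation
\[
 |H/H'| \;=\; |H : H_G H'| \cdot |H_G H' : H'| \;\leqslant\; |H : H_G| \cdot |H_G / H_G'| \;\leqslant\; (n-1)! \cdot f(n!)
\]
(using Lemma \ref{3lemAbGSub}(ii)'s proof ingredient $H_G H'/H' \cong H_G/(H_G \cap H')$) produces a bound on $ab_n(\mathcal{G})$ depending only on $n$.

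For $(ii) \Rightarrow (iii)$ the strategy is to iterate the function $g(n) := ab_n(\mathcal{G})$ along the derived series. Given $H \leqslant G$ with $|G:H| \leqslant n$ and $N \lhd H$ with $H/N$ solvable of derived length $\ell$, one has $H^{(\ell)} \leqslant N$ and hence $|H/N| \leqslant \prod_{i=0}^{\ell-1} |H^{(i)}/H^{(i+1)}|$. Each $H^{(i)}$ is a subgroup of $G$, and its index grows recursively: setting $n_0 := n$ and $n_{i+1} := n_i \cdot g(n_i)$, an easy induction shows $|G:H^{(i)}| \leqslant n_i$, whence $|H^{(i)}/H^{(i+1)}| \leqslant g(n_i)$. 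The product is then an explicit function of $n$ and $\ell$ alone.

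None of the four steps is genuinely hard. The only real care is bookkeeping in the iteration of $(ii) \Rightarrow (iii)$, and verifying that Lemma \ref{3lemAbGSub}(ii) (or the small computation above) really lets one transfer the bound from the normal core $H_G$ back to the original $H$ in $(i) \Rightarrow (ii)$; both of these are more notational than substantive.
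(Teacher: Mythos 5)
Your proof is correct and essentially matches the paper's. You run a slightly longer cycle of four implications where the paper uses three (treating $(iii)\Rightarrow(i)$ as immediate), and in $(ii)\Rightarrow(iii)$ you unroll the derived series of $H$ explicitly while the paper phrases the same recursion as an induction on the derived length of $H/N$, but the normal-core argument for $(i)\Rightarrow(ii)$ and the iterated bound $n_{i+1}=n_i\cdot g(n_i)$ are exactly the ideas in the paper's proof.
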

     \begin{proof}
      ``(iii) $\Rightarrow$ (i)'' is obvious.\\
 ``(i) $\Rightarrow$ (ii)''  Let $H \leqslant G \in \mathcal{G}$, and let $N := \cap_{x \in G} H^x$ be its normal core.
We have
\begin{align*}
|H/H'| & \> = \> 
|H:NH'| |NH':H'|  \\ & \> \leq \> 
|G:N| |N:N \cap H'|  \\ & \> \leq \>
|G:N| |N:N'| \> ,
\end{align*}
and the last term can be bounded from $|G:N|$ by hypothesis.
   Since $G/N$ is a permutation group over $|G:H|$ elements, we have $|G/N| \leq |G:H|!$.\\
 ``(ii) $\Rightarrow$ (iii)'' We work by induction on the derived length $\ell$.
 The case $\ell=1$ is the definition of BAb, so suppose that $H/N$ is a solvable section of $G \in \mathcal{G}$,
 and let $\ell+1$ be its derived length.
 We have $|H/N|=|H:NH'||NH':N|$, and $|H:NH'|$ can be bounded from $|G:H|$, because $|H:NH'| \leq |H:H'|$.
 Moreover,
 $NH'/N \cong (H/N)'$ is a solvable section of derived length $\ell$,
 and so by induction $|NH':N|$ can be bounded from $|G:NH'|$, and
 $$ |G:NH'| = |G:H||H:NH'| \leq |G:H||H:H'| . $$
 Since this can be bounded from $|G:H|$ by hypothesis, the proof follows.
     \end{proof}
     \vspace{0.1cm}
     
     The previous proof shows that the abelianization growth can be (crudely) estimated
     by a check on abelian quotients of normal subgroups.
     In particular, we remark that $|G/\cap_{x \in G} H^x| \leq 3^{|G:H|}$
     if $G/\cap_{x \in G} H^x$ is solvable \cite{1967Dixon}.
      
     \end{subsection}
     
     \end{section}

     \vspace{0.2cm}
     \begin{section}{Abelianization and representation growths}  \label{sect3}
     
      The counting of representation growth usually concerns a fixed infinite group $G$.
     The corresponding notion for a family of finite groups $\mathcal{G}$ is \index{$Rep_n(\mathcal{G})$|textbf}
    $$ Rep_n(\mathcal{G}) \> := \> \sup_{G \in \mathcal{G}} \hspace{0.1cm} Rep_n(G) \> . $$ 
  If representations with infinite image are considered,
    the matter of representation growth cannot be reduced to the matter of finite groups,
    not even for a finitely generated residually finite group.
    With our restricted definition we have
    
    \begin{lemma} \label{propReductionRG}
   Let $G$ be an arbitrary group, and let $\mathcal{G}$ be the family of its finite quotients.
   Then
     $$ Rep_n(G) \> = \> Rep_n(\mathcal{G}) $$
     for every $n \geq 1$.
    \end{lemma}
    \begin{proof}
    For every $N \lhd G$ of finite index, a representation of $G/N$ can be seen as a representation of $G$ with finite image.
   It follows that
    $$ Rep_n(\mathcal{G}) = \sup_{G/N \in \mathcal{G}} Rep_n(G/N) \leq Rep_n(G) . $$
    To prove equality,
    we have to show that two different $G$-representations with finite image are already different in some finite quotient of $G$.
   Let $\pi, \rho \in Irr_n(G)$ for some $n \geq 1$, and let $\pi \neq \rho$.
   Since both $Ker(\pi)$ and $Ker(\rho)$ have finite index in $G$, then $N:=Ker(\pi) \cap Ker(\rho)$ has finite index,
  and $\pi$ and $\rho$ are distinct representations of $G/N$.
    \end{proof}
    \vspace{0.1cm}
     
     Essentially, the next is taken from \cite{2006KN}.
     
      \begin{example} \label{exKN}
      For every $d \geq 3$ and $k \geq 5$, the direct product $(\Alt(k))^{(k!/2)^{d-2}}$ is $d$-generated \cite[Lemma 2]{1978Wie}.
      Thus, from Theorem \ref{thKN} and the discussion below \cite[Theorem 1.4]{2006KN} in that paper,
      there exists a $d'$-generated residually finite group $G$ with profinite completion
      $\prod_{k=5}^{\infty} (\Alt(k))^{(k!/2)^{d-2}}$, where $d':=22(d+1)$.
      For every $k \geq 5$, $G$ has at least $(k!/2)^{d-2}$ different representations of degree $k-1$,
      one for each factor in the corresponding direct product. It follows that
      $$ Rep_n(G) \> \geq \> ((n+1)!/2)^{d-2} \> \geq \> (n!)^{d-2} \geq (n!)^{d'/50} $$
      for every $n \geq 4$.
      \end{example}
      \vspace{0.1cm}
     
     Now we recall two results in representation theory:
     the first is the famous Jordan theorem about the finite subgroups of $GL_n(\mathbb{C})$.
     
     \begin{theorem}[Jordan function] \label{thJSC}
     There exists a function $j: \mathbb{N}_+ \rightarrow \mathbb{N}_+$ such that the following hold:
     \begin{itemize}
     \item[(i)] if $G$ is a finite subgroup of $GL_n(\mathbb{C})$,
     then $G$ has an abelian normal subgroup of index
     at most $j(n)$ \cite{1878Jordan}; 
     \item[(ii)] $j(n) \leq (n+1)!$ for every $n \geq 71$ \cite[Theorem A]{2007Collins}.
     \end{itemize}
     \end{theorem}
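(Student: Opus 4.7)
Both parts of the statement are classical results that we intend to use as black boxes in what follows, so the plan is simply to cite \cite{1878Jordan} and \cite{2007Collins}. Still, let me sketch what is involved, to make clear why we do not attempt either proof from scratch.

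For part (i), Jordan's original argument proceeds by the unitary trick. After averaging a positive-definite Hermitian form over $G$, one may conjugate $G$ inside the compact group $U(n)$. Using the commutator estimate $\|[A,B]-I\| \leq 2\|A-I\|\cdot\|B-I\|$ in operator norm, one shows that any subgroup of $G$ generated by elements sufficiently close to the identity is abelian: iterated commutators shrink to the identity and eventually become trivial in a finite group. A volume/compactness count in $U(n)$ then gives an upper bound for the index of the subgroup generated by $G \cap V$, where $V$ is a small enough neighborhood of $I$, and passing to the normal core of this subgroup produces an abelian normal subgroup of index bounded in terms of $n$ only.

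For part (ii), refining the index bound to $(n+1)!$ for $n \geq 71$ is considerably deeper and is the content of Collins' work. The plan there is: first reduce to a finite, irreducible, primitive subgroup of $GL_n(\mathbb{C})$ with no abelian normal subgroup of comparably small index; then invoke the classification of finite simple groups, together with known lower bounds on the degrees of faithful projective representations of quasisimple groups, to restrict the possible non-abelian composition factors. A long case analysis (Lie type, alternating, sporadic) shows that, for $n$ large, the extremal configuration is $\Sym(n+1)$ acting via its deleted permutation representation, which realises the bound $(n+1)!$ precisely.

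The main obstacle in (i) is obtaining any bound at all, for which the crude commutator-and-compactness argument suffices; the main obstacle in (ii) is the CFSG-based case analysis, which we do not attempt to reproduce and simply import from \cite{2007Collins}.
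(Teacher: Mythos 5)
The paper states this result purely as a citation of Jordan \cite{1878Jordan} and Collins \cite{2007Collins} and offers no proof, so your decision to treat both parts as black boxes is exactly what the paper does. Your background sketches are accurate as far as they go: the compactness/commutator argument you outline for (i) is the standard modern proof of Jordan's theorem (the estimate $\|[A,B]-I\|\leq 2\|A-I\|\|B-I\|$ in $U(n)$ is correct and is the engine), and your summary of (ii) correctly identifies the reduction to primitive groups, the reliance on CFSG together with lower bounds on projective degrees of quasisimple groups, and the extremal configuration $\Sym(n+1)$ in its deleted permutation representation giving the sharp value $(n+1)!$. Nothing to fix.
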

     \vspace{0.1cm}
     
     The second result is the representation growth version of Lemma \ref{lemAbGSub},
     and it is a clever application of the Frobenius reciprocity law.
     We give our proof, because it will be useful in Section \ref{sect5}.
  
     \begin{lemma}[Lemma 2.2 in \cite{2004LM}] \label{lemRepGSub}
     Let $G$ be a finite group, $H \leqslant G$ and $n \geq 1$. We have
     
     \begin{itemize}
     \item[(i)] $Rep_n(H) \leq |G:H| \cdot Rep_{n|G:H|}(G)$;
     \item[(ii)] $Rep_n(G) \leq |G:H| \cdot Rep_n(H)$.
     \end{itemize}
     
     \end{lemma}
     \begin{proof}
     For every $n \geq 1$, let us denote by $Irr_n(G)$ the set of the irreducible $G$-representations (up to isomorphism)
     of degree at most $n$.
     We define a map $\phi: Irr_n(H) \rightarrow Irr_{n|G:H|}(G)$ in the following way:
     for every $\rho \in Irr_n(H)$, choose $\phi(\rho)$ as an irreducible constituent of the induced representation $\rho^{\uparrow G}$.
    Whenever $\phi(\rho)$ is a constituent of $\pi^{\uparrow G}$ for some $\pi \in Irr_n(H)$,
     by Frobenius reciprocity $\pi$ is a constituent of the restriction $\phi(\rho)_{\downarrow H}$.
     Fix $\rho \in Irr_n(H)$, and let
     $\pi$ be of minimal degree among the irreducible representations in $\phi(\rho)_{\downarrow H}$.
     We have
     \begin{align*}
|\phi^{-1}\phi(\rho)| \dim(\pi)  &  \> \leq \>
\dim(\phi(\rho)_{\downarrow H}) \\ & \>  = \>
\dim(\phi(\rho)) \\ &  \> \leq \>
 \dim(\pi^{\uparrow G}) \\ & \>  = \>
 |G:H| \dim(\pi) \> .
\end{align*}
     Thus the fibers of $\phi$ have size at most $|G:H|$, and (i) follows.\\
     For (ii), similarly define a map $\varphi: Irr_n(G) \rightarrow Irr_n(H)$ in the following way:
     for every $\rho \in Irr_n(G)$, choose $\varphi(\rho)$ as an irreducible constituent of $\rho_{\downarrow H}$.
     Whenever $\varphi(\rho)$ is a constituent of $\pi_{\downarrow H}$ for some $\pi \in Irr_n(G)$,
     by Frobenius reciprocity $\pi$ is a constituent of $\varphi(\rho)^{\uparrow G}$.
     Fix $\rho \in Irr_n(G)$,
     and let $\pi$ be of minimal degree among the irreducible representations in $\varphi(\rho)^{\uparrow G}$
     We have
       \begin{align*}
|\varphi^{-1}\varphi(\rho)| \dim(\pi)  & \> \leq \>
\dim(\varphi(\rho)^{\uparrow G}) \\ & \> = \>
|G:H| \dim(\varphi(\rho))  \\ & \> \leq \>
 |G:H| \dim(\pi_{\downarrow H}) \\ & \> = \>
 |G:H| \dim(\pi) \> ,
\end{align*}
	and the proof follows as before.
	We remark that (\ref{eqLM}) is obtained by setting $n=1$ in (i).
     \end{proof}
     \vspace{0.1cm}
     
    \begin{proof}[Proof of Proposition \ref{propRGBAb}]
    Fix $n \geq 1$.
    From Theorem \ref{thJSC} (i),
    for every representation $\rho : G \rightarrow GL_n(\mathbb{C})$,
    the finite image $\rho(G)$ has an abelian subgroup of index at most $j(n)$.
    Let $A_\rho \leqslant G$ be the pre-image of such an abelian subgroup.
    By the correspondence theorem we have $|G:A_\rho| = |\rho(G):\rho(A_\rho)| \leq j(n)$.
    Moreover, by the abelianity of $\rho(A_\rho)$,
    $$ |\rho(A_\rho)| \leq |A_\rho/(A_\rho)'| \leq ab_{|G:A_\rho|}(G) \leq ab_{j(n)}(G) , $$
	and then
	$$ |\rho(G)| =  |\rho(G):\rho(A_\rho)| \cdot |\rho(A_\rho)| \leq j(n) \cdot ab_{j(n)}(G) . $$ 
	From the main result of Lubotzky \cite{2001Lub}, and Remark 1 in that paper,
	there exist at most $|\rho(G)|^{3d \cdot \log|\rho(G)|}$ abstract groups of order at least $|\rho(G)|$.
	For each of these abstract groups, there are at most $|\rho(G)|$ irreducible representations up to equivalence,
	the exact number being the number of conjugacy classes.
	Then, there are at most $|\rho(G)|$ possible embeddings into $GL_n(\mathbb{C})$ up to conjugation.
	The number of homomorphisms from $G$ to one fixed embedding is at most $|\rho(G)|^d$.
	To sum up
	$$ Rep_n(G) \leq |\rho(G)|^{3d \log|\rho(G)|+1+d} \leq 
	(j(n) \cdot ab_{j(n)}(G))^{ 4d \log( j(n) \cdot ab_{j(n)}(G))} . $$
	When $n \leq 70$, the proof follows by choosing the constant $C$ in the statement sufficiently large.
	For $n \geq 71$, we use Theorem \ref{thJSC} (ii) and the trivial inequality $(n+1)! \leq (n!)^2$.
	 \end{proof}
	 \vspace{0.1cm}
	
    We can improve Proposition \ref{propRGBAb} in some cases.
    A finite group is called {\bfseries monomial}
    if each of its irreducible representations is induced from some $1$-dimensional representation of some subgroup.
    It is known that the class of monomial groups lies between those of nilpotent and solvable groups \cite[Corollary 5.13]{1976Isaacs}
    (originally attributed to Taketa).
     
       \begin{lemma} \label{lemRGMono}
     If $G$ is a $d$-generated monomial group, then
       $$ Rep_n(G) \> \leq \> 4^{d n} \cdot ab_n(G) $$
       for all sufficiently large $n$.
     \end{lemma}
     \begin{proof}
   For all $n \geq 1$, every representation of degree at most $n$ is induced by a $1$-dimensional representation of some subgroup
   $H \leqslant G$ of index at most $n$.
   This implies that
   $$ Rep_n(G) \leq Sub_n(G) \cdot Rep_{_1}(H) \leq Sub_n(G) \cdot ab_n(G) . $$
   As we said above, monomial groups are solvable,
   and so the proof follows from Lemma \ref{lemSubGrowth}.
     \end{proof}
   \vspace{0.1cm}
     
     Combining (\ref{eqLM}) and Lemma \ref{lemRGMono} we obtain
     
     \begin{corollary} \label{corMono}
     Let $\mathcal{G}$ be a family of $d$-generated monomial groups, for some fixed $d \geq 1$.
     Then $ab_n(\mathcal{G})$ is at most exponential if and only if $Rep_n(\mathcal{G})$ is at most exponential.
     \end{corollary}
     \vspace{0.1cm}

     \begin{subsection}{Upper and lower bounds} \label{subsec3.1}
     
     In general, there is no upper bound for $ab_n(G)$ and $Rep_n(G)$.
     To see this, we modify the construction of Example \ref{exEx}.
     
     \begin{proposition} \label{propFastAG}
     For every function $f: \mathbb{N}_+ \rightarrow \mathbb{N}_+$ there exists a
     finitely generated FAb residually finite group $G$ such that $ab_n(G) \geq f(n)$ for infinitely many $n$.
     The same is true for $Rep_n(G)$.
     \end{proposition}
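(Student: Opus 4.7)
The plan is to realize $G$ as a finitely generated residually finite group whose profinite completion is a direct product $\widehat G = \prod_{k\ge 1} T_k$ of finite perfect groups, each carrying an abelian normal subgroup of small index and very large order. For each $k\ge 1$, fix $n_k \ge 5$ with $n_k\to\infty$ and a prime $p_k$ with $p_k\nmid n_k$ and $p_k^{n_k-1}\ge f(n_k!/2)$, and let $T_k := V_k \rtimes \Alt(n_k)$, where $V_k$ is the $(n_k-1)$-dimensional deleted permutation module of $\Alt(n_k)$ over $\mathbb{F}_{p_k}$. Since $\Alt(n_k)$ is perfect and acts irreducibly and without nonzero fixed vectors on $V_k$, the commutator identity $[V_k,\Alt(n_k)] = V_k$ combined with the perfectness of $\Alt(n_k)$ gives that $T_k$ is perfect; moreover $V_k \le T_k$ is abelian of index $n_k!/2$ and order $p_k^{n_k-1}\ge f(n_k!/2)$.

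A Kassabov--Nikolov-style construction (compare Example \ref{exKN}) then yields a finitely generated residually finite group $G$ whose profinite completion is $\widehat G = \prod_k T_k$. The relevant inputs are the uniform bounded generation of the $T_k$ (each is generated by the two Dixon generators of $\Alt(n_k)$ plus one module generator of $V_k$) together with the fact that the nontrivial simple quotients of $T_k$ are the pairwise non-isomorphic alternating groups $\Alt(n_k)$; Goursat's lemma then forces every finite subdirect subproduct of the $T_k$ to be the full product, yielding topological finite generation of $\prod_k T_k$. Perfectness of each $T_k$ ensures that every open subgroup $V\le\widehat G$ contains a perfect tail $\prod_{k>N}T_k$, so $V^{ab}$ is a quotient of the finite group $V/\prod_{k>N}T_k$; hence $\widehat G$ has profinite FAb, and since abelianization commutes with profinite completion for finitely generated groups, so does $G$.

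Finally, the open subgroup $U_k := V_k\times\prod_{l\ne k}T_l\le\widehat G$ has index $n_k!/2$ in $\widehat G$, and its abelianization is $V_k$ of order $p_k^{n_k-1}$ (the other factors being perfect contribute nothing). Because $G$ is dense in $\widehat G$, the corresponding subgroup $G\cap U_k$ of $G$ has the same index and surjects onto $V_k$, so $ab_{n_k!/2}(G)\ge p_k^{n_k-1}\ge f(n_k!/2)$; as $n_k!/2\to\infty$ this provides infinitely many $n$ with $ab_n(G)\ge f(n)$. For the representation growth statement, apply the same construction with $f$ replaced by $m\mapsto m\cdot f(m)$ and use the inequality $Rep_n(G)\ge ab_n(G)/n$ from (\ref{eqLM}). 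The main technical obstacle is the step in the second paragraph: verifying that the Kassabov--Nikolov machinery goes through for our perfect but not simple building blocks $T_k$; this relies on the uniform bounded generation and distinctness of simple composition quotients, so the standard argument should adapt.
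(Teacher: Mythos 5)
Your proposal is correct and takes essentially the same route as the paper: both build $G_k := V_k \rtimes \Alt(k)$ with $V_k$ the deleted permutation module over $\mathbb{F}_{p_k}$ for fast-growing primes, pass to the cartesian product, realize it as the profinite completion of a finitely generated residually finite group via Kassabov--Nikolov, and derive the $Rep_n$ statement from (\ref{eqLM}). The ``main technical obstacle'' you flag in the last paragraph is handled in the paper precisely by citing \cite[Lemma 2.4]{2006KN} (together with the observation that $G_k = \langle \Alt(k), [v,\Alt(k)]\rangle$ for any nontrivial $v \in V_k$), so no new argument is actually needed there.
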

     \begin{proof}
     The statement for $Rep_n(G)$ follows from that for $ab_n(G)$ and (\ref{eqLM}).
     We divide the proof for $ab_n(G)$ in three steps:
     \vspace{0.1cm}
     
     {\itshape Step 1.}
     Let $(p_k)_{k \geq 5}$ be a sequence of primes to be fixed later,
     and consider the permutational wreath product $(\mathbb{F}_{p_k})^k \rtimes \Alt(k)$.
    Let $V_k$ be the $(k-1)$-dimensional subspace of $(\mathbb{F}_{p_k})^k$
     which is constituted by the vectors whose coordinates sum to zero.
     We restrict the semidirect product to this subspace, and define
      $$ G_k \> := \> V_k \rtimes \Alt(k) $$
       for all $k \geq 5$.
       Let $G:=\prod_{k \geq 5} G_k$ be the cartesian product of the $G_k$'s.
       \vspace{0.1cm}
       
     {\itshape Step 2.}
     Since $G_k$ is perfect for every $k \geq 5$, $G$ has FAb from Lemma \ref{lemPerfFAb}
     (actually we could use Lemma \ref{lemPerfBAb}, but this is not required). 
     On the other hand, let $H := G_5 \times ... \times G_{k-1} \times V_k \leqslant G_k$.
     We have $|G:H|=k!/2$, and $|H/H'| = (p_k)^{k-1}$.
     Hence, for a sufficiently fast sequence of primes $(p_k)_{k \geq 5}$,
     we obtain $ab_{k!/2}(G) \geq f(k!/2)$ for every $k \geq 5$.
     \vspace{0.1cm}
     
     {\itshape Step 3.}
     By Theorem \ref{thKN}, $\prod_{k \geq 5} \Alt(k)$ is the profinite completion
     of a finitely generated residually finite group.
     If $v \in V_k$ is a non-trivial element, then $G_k$ is generated by $\Alt(k)$ and $[v,\Alt(k)]$,
     and so \cite[Lemma 2.4]{2006KN} gives that $G$ itself is the profinite completion of a finitely generated residually finite group.
     The proof is completed by Proposition \ref{propFAbBAb}.
     \end{proof}
     \vspace{0.1cm}
     
     We conclude this section with some remarks about lower bounds.
     It follows from the definitions of $ab_n(\mathcal{G})$ and $Rep_n(\mathcal{G})$ that
      \begin{equation} \label{eqLBT}
      ab_{|G|}(\mathcal{G}) \> \geq \> ab_{|G|}(G) \hspace{0.8cm} 
      \mbox{ and } \hspace{0.8cm}  Rep_{|G|}(\mathcal{G}) \> \geq \> Rep_{|G|}(G) 
	\end{equation}    
	 for every family of finite groups $\mathcal{G}$ and every $G \in \mathcal{G}$.
	 Of course, $ab_{|G|}(G)$ is the size of the largest abelian section of $G$.
	 From a standard result in representation theory, $Rep_{|G|}(G)$ equals the number of conjugacy classes of $G$, which we denote by $k(G)$.
	The inequalities in (\ref{eqLBT}) are not effective if $ab_n(\mathcal{G})$ or $Rep_n(\mathcal{G})$
	 are fast (in the extreme case, if $\mathcal{G}$ does not have BAb).
      However, we saw in the proof of Theorem \ref{thLBAbG} that the first inequality is surprisingly precise in giving a general lower bound
      (by ``general'' here we mean that it is true for every $\mathcal{G}$).
      Can one do the same for $Rep_n(\mathcal{G})$?
     The best known lower bound for $k(G)$ is due to the companion Baumeister-Maroti-Tong-Viet \cite{2016BMV},
     and it is the following:
     for every $\varepsilon >0$ there exists $C_\varepsilon>0$ such that
     \begin{equation} \label{eqBMV}
     k(G) \> \geq \> C_\varepsilon \cdot \frac{\log |G|}{(\log \log |G|)^{3+\varepsilon}} 
	\end{equation}     
     for every finite group $G$ of size at least $3$. On the other side,
     there are arbitrarily large groups satisfying $k(G) \leq C \cdot (\log |G| / \log\log|G|)^2$ (see \cite[pag. 375]{1997Pyber}).
     When combined to (\ref{eqLBT}), (\ref{eqBMV}) yields the best lower bound we can provide at the moment,
     for the representation growth of every family of arbitrarily large finite groups.
     It is worth noticing that Craven \cite{2010Craven} used a different idea to obtain a better bound,
     which holds only for a fixed infinite profinite group $G$, but not for an arbitrary family of finite groups.
     
    \begin{proposition}[Proposition 2.2 in \cite{2010Craven}] \label{propCraven}
    For every $\varepsilon>0$ there exists $C_\varepsilon>0$ such that the following holds.
     If $G$ is an infinite, finitely generated profinite group with FAb, then
     $$ Rep_n(G) \> \geq \> C_\varepsilon \cdot (\log n)(\log \log n)^{1-\varepsilon} $$
     for infinitely many $n$.
     \end{proposition}
     \vspace{0.1cm}
     
     In short, Craven's proof distinguishes if $G$ has finitely many or infinitely many maximal subgroups.
     In the first case $G$ is virtually pro-nilpotent,
     while in the second case it maps onto arbitrarily large finite groups with trivial Frattini subgroup.
     In both cases, the representation growth is as fast as claimed.
     As the second part of Theorem \ref{thLBAbG} points out, the same approach fails for $ab_n(G)$,
     essentially because of the relatively slow abelianization growth of alternating groups.
     
     \end{subsection}
     
     \end{section}

     \vspace{0.2cm}
     \begin{section}{Polynomial abelianization growth} \label{sect4}
     
     \begin{definition}
     Let $G$ be a group, and let $\mathcal{G}$ be a family of finite groups.
     We say that $G$ ($\mathcal{G}$, respectively) has {\bfseries Polynomial Abelianization Growth}
     if there exist absolute constants $C,\alpha >0$ such that
     $$ ab_n(G) \> \leq \>  C n^\alpha $$
     for every $n \geq 1$ ($ab_n(\mathcal{G}) \leq C n^\alpha$, respectively).
     The notion of {\bfseries Polynomial Representation Growth} is defined similarly.
     \end{definition}
     \vspace{0.1cm}
    
     We remark that (\ref{eqLM}) says that PRG implies PAG in every residually finite group.
     Here are some cases where PAG is realized.
   
   \begin{lemma} \label{lemQuasi}
   Let $G$ be a finite group, and $\varepsilon>0$.
   The following hold:
   
   \begin{itemize}
   \item[(i)]  if $|G/G'| \leq \varepsilon^{-1}$,
     and $|G:H| \geq |G|^\varepsilon$ for every proper subgroup $H < G$,
     then
  	$$ ab_n(G) \> \leq \> \varepsilon^{-1} \cdot n^{\varepsilon^{-1} -1} $$
   for every $n \geq 1$;
   \item[(ii)] if $\dim(\pi) \geq |G|^\varepsilon$ for every non-trivial irreducible representation $\pi$ of $G$,
   then 
   $$ Rep_n(G) \> \leq \> n^{\varepsilon^{-1} -1} $$
   for every $n \geq 1$.
   \end{itemize}
   
   \end{lemma}
   \begin{proof}
   (i) For every proper subgroup $H<G$ we have
     $$ |H| \leq |G|^{1-\varepsilon} = |G:H|^{1-\varepsilon}|H|^{1-\varepsilon} , $$
     and arranging the terms we obtain $|H|^\varepsilon \leq |G:H|^{1-\varepsilon}$, so that
     $$ |H/H'| \leq |H| \leq |G:H|^{\varepsilon^{-1}-1} . $$
     Combined with $|G/G'| \leq \varepsilon^{-1}$, this gives the desired inequality.\\
     (ii)
      By hypothesis, we have $Rep_n(G) = 1$ for every $n < |G|^\varepsilon$.
   Let $n \geq |G|^\varepsilon$, and let $r_n(G)$ be the number of (inequivalent) irreducible representations of degree {\itshape exactly} $n$.
  Since $|G| = \sum_{\pi \in Irr(G)} (\dim \pi)^2$, we have
  $$ r_n(G) \leq |G|/n^2 \leq n^{\varepsilon^{-1} -2} . $$
  Finally
  \begin{align*}
Rep_n(G)  & \> = \>
1 + \sum_{j=\lceil |G|^\varepsilon \rceil}^n r_j(G) \\ & \> \leq \>
1 + (n-\lceil |G|^\varepsilon \rceil +1) n^{\varepsilon^{-1} -2}  \\ & \> \leq \>
 n^{\varepsilon^{-1} -1} \> . \qedhere 
\end{align*}
   \end{proof}
   \vspace{0.1cm}
   
   Sometimes the condition $\dim(\pi) \geq |G|^\varepsilon$ for every non-trivial $\pi$ and fixed $\varepsilon>0$
   is called $\varepsilon$-{\bfseries quasirandomness}.
   It is a property shared by quasisimple groups of Lie type of bounded Lie rank \cite{1974LS},
   and it implies what is asked in Lemma \ref{lemQuasi} (i).
   On the other hand,
   Theorem \ref{thPAGoSG} and Example \ref{exKN} show that PAG does not imply PRG in general, not even for finitely generated groups.
   We are going to prove Theorem \ref{thPAGoSG}.
   When $H$ is an arbitrary group and $N \lhd H$, one has
   \begin{align*}
|H/H'| & \> = \> 
|H:NH'| \cdot |NH': H'| \\ & \> = \> 
|(H/N):(H/N)'| \cdot |N : N \cap H'| \> ,
\end{align*}
   and so $|H/H'| \leq |(H/N):(H/N)'||N:N'|$, because $N' \leqslant N \cap H'$.
   
    \begin{lemma}[Product formula] \label{lemAbGExt}
    Let $N \lhd G$. Then, for every $n \geq 1$,
    $$ ab_n (G) \> \leq \> \max_{j \> | \> n} \> ( ab_{n/j} (G/N) \cdot ab_j (N) ) \> . $$
     In particular,
   $$ ab_n (G/N) \> \leq \> ab_n (G) \> \leq \> ab_n (G/N) \cdot ab_n (N) $$
    for all $n \geq 1$.
    \end{lemma}
    \begin{proof}
    Let $n \geq 1$ and $H \leqslant G$ with $|G:H| \leq n$.
    From the previous inequality we have
    \begin{align*}
|H/H'| & \> \leq \> 
|(H/N \cap H):(H/N \cap H)'| \cdot |N \cap H : (N \cap H)'| \\ & \> = \> 
|(NH/N):(NH/N)'| \cdot |N \cap H : (N \cap H)'| \> .
\end{align*}
    Moreover,
    $$ |(G/N):(NH/N)| = |G:NH|, \hspace{1cm} \mbox{and} \hspace{1cm} |N: N \cap H| = |NH:H| . $$
  	Let $j:=|NH:H|$. The three computations above combined give
  	$$ |H/H'| \leq ab_{|G:H|/j}(G/N) \cdot ab_{j}(N) . $$
  	The proof follows.
    \end{proof}
     \vspace{0.1cm}

     We divide the proof of Theorem \ref{thPAGoSG} in three steps:
     \vspace{0.1cm}
     
     {\bfseries \MakeUppercase{\romannumeral 1.} Reduction to $G$ finite}.
     This is required to use the classification of non-abelian finite simple groups.
     Let $G$ be a group with $D$ abelian composition factors, which size is at most $C$.
     Let $H \leqslant G$ be a subgroup of finite index and let $N:=\cap_{x \in G} H^x$ be its normal core.
     Arguing as in the proof of Lemma \ref{lemEquivBAb} we have
     \begin{align*}
|H/H'| & \> = \> 
|H:NH'| |NH':H'|  \\ & \> \leq \> 
|G:N| |N:N \cap H'|  \\ & \> \leq \>
|G:H|! \cdot |N:N'| \> .
\end{align*}
By the Jordan-H\"older theorem we have $|N/N'| \leq C^D$.
Thus $G$ has FAb, and the reduction follows from Proposition \ref{propFAbBAb} (i).
     \vspace{0.1cm}
     
     {\bfseries \MakeUppercase{\romannumeral 2.} One simple group}.
   	Let $S$ be a non-abelian finite simple group.
     This step follows from the work of Liebeck and Shalev \cite{2005LS} on character degrees of finite simple groups:
     they proved the following theorem about the zeta function
     $$ \zeta_S(t) \> := \> \sum_{\pi \in Irr(S)} (\dim \pi)^{-t} \> , $$
     where $\pi$ ranges among the (inequivalent) irreducible representations of $S$.
     
     \begin{theorem}[Theorem 1.1 in \cite{2005LS}]
     Let $S$ be a non-abelian finite simple group. If $t>1$, then
     $$ \zeta_S(t) \rightarrow 1 \hspace{0.5cm} \mbox{ as } \hspace{0.5cm} |S| \rightarrow +\infty . $$
     \end{theorem}
     \vspace{0.1cm}
     
     We remark that a weaker fact is needed to prove Theorem \ref{thPAGoSG},
     namely that there exists $t_0$ such that $\zeta_S(t_0)$ is bounded when $|S| \rightarrow +\infty$.
     Now, it is easy to see that the zeta function is a sum involving representation growth.
     Let $r_n(S)$ be the number of (inequivalent) irreducible representations of degree {\itshape exactly} $n$.
     Arranging the terms we can write
     $$ \zeta_S(2) \> = \> \sum_{\pi \in Irr(S)} (\dim \pi)^{-2} \> = \>  \sum_{n \geq 1} r_n(S) n^{-2} \> . $$
     Since the series converges when $|S|$ grows,
     there exists an absolute constant $c>0$ such that $r_n(S) \leq c n^2$ for all $n$ and $S$.
     Hence
    $$ Rep_n(S) \> = \> \sum_{j =1}^n r_j(S) \> \leq \> c n^3 \> , $$
     and combining this inequality with (\ref{eqLM}), we obtain
     $$ ab_n (S) \> \leq \> n \cdot Rep_n(S) \> \leq \> cn^4 \> . $$ 
     
     We point out that it would be very interesting to have a CFSG-free proof of the inequality $|H/H'| \leq |G:H|^\alpha$
     in non-abelian finite simple groups.
      \vspace{0.1cm}
     
        {\bfseries \MakeUppercase{\romannumeral 3.} Mixing simple groups}.
     Since $S$ is perfect, we can stretch $cn^4$ to a monic polynomial $n^\alpha$. In fact, for $n = 2$ we have
     $$ cn^4 = n^{4 + \log(c)} , $$
     while ``$<$'' holds for all larger $n$.
     Thus we can write $ab_n(S) \leq n^\alpha$ for some fixed $\alpha>0$ and every $n \geq 1$.
     Now let $G$ be a finite group with $D$ abelian composition factors of size at most $C$.
     We work by induction on the composition length, so let $N \lhd G$ be a maximal normal subgroup.
     If the simple quotient $G/N$ is non-abelian,
     then $ab_n(G/N) \leq n^\alpha$ by the previous step, and $ab_n(N) \leq C^D n^{\alpha}$ by induction.
     Using Lemma \ref{lemAbGExt}, we obtain
     $$ ab_n(G) \leq \max_{j \> | \> n} \> ((n/j)^\alpha C^D (j)^\alpha) = C^D n^\alpha $$
     for all $n \geq 1$.
     When some abelian quotient $G/N$ appears, then by induction $ab_n(N) \leq C^{D-1} n^{\alpha}$.
     Let $H \leqslant G$.
     Since $|NH/N| \leq |G/N| \leq C$, we can write
     \begin{align*}
|H/H'|  & \> \leq \> 
|(H/N \cap H):(H/N \cap H)'| \cdot |N \cap H : (N \cap H)'| \\ & \> \leq \> 
 |NH/N| \cdot ab_{_{|N:N \cap H|}}(N) \\ & \> \leq \> 
 C \cdot C^{D-1} |N:N \cap H|^\alpha \\ & \> \leq \> 
 C^D |G:H|^\alpha .
\end{align*}
     This concludes the proof of Theorem \ref{thPAGoSG}.
     \vspace{0.1cm}
     
     \begin{remark}
      It is not true that mixing (via group extensions) infinitely many groups with PAG,
     we always do obtain a group with PAG. Actually, we do if only finitely many of the starting groups are not perfect.
     Moreover, (\ref{eqLM}) and Proposition \ref{propRGBAb} imply that,
     for a family $\mathcal{G}$ of finite groups which are all $d$-generated for a fixed $d$,
     $ab_n(\mathcal{G})$ is finite for every $n \geq 1$ if and only if the same is true for $Rep_n(\mathcal{G})$.
      Let $G^k := G \times ... \times G$ be the direct product of $k$ copies of a perfect group $G$,
      and let $\mathcal{G} := (G^k)_{k \geq 1}$.
      Then $\mathcal{G}$ has BAb, but certainly $Rep_{|G|}(\mathcal{G})$ is not finite.
    This shows that the dependence on $d$ in Proposition \ref{propRGBAb} is necessary.
     \end{remark}
     \vspace{0.1cm}
     
      Characterizing groups with PAG (or even PRG) seems to be a serious task:
      
      \begin{example}[$p$-groups with PRG]
      Highlighting a nice connection,
      Lubotzky and Martin \cite{2004LM} proved that an arithmetic group has polynomial representation growth
      if and only if it has the so-called Congruence Subgroup Property
      (informally, this means that all the finite index subgroups arise from the arithmetic structure of the group).
      This property is own, for example, by $SL_r(\mathbb{Z})$ when $r \geq 3$
      (notice that $SL_2(\mathbb{Z})$ does not even have FAb, because of its free subgroups of finite index).
      Let $G:=SL_r(\mathbb{Z})$, and for fixed $p \geq 2$ and every $k \geq 1$ define
      $$ G_k \> := \> Ker \{ \> SL_r(\mathbb{Z}) \twoheadrightarrow SL_r(\mathbb{Z}/p^k \mathbb{Z}) \> \} \> . $$
      Now $G_1$ has finite index in $G$, and so it has polynomial representation growth from Lemma \ref{lemRepGSub} (i).
      Moreover, it is not hard to check that $G_1/G_k$ is a $p$-group for every $k \geq 1$.
      It follows (from Lemma \ref{propReductionRG}, for example)
      that $(G_1/G_k)_{k \geq 1}$ is a family of $p$-groups with polynomial representation growth.
      \end{example}
       \vspace{0.1cm}
       
       To prove Theorem \ref{thKN},
       we need a sharp bound on the number of $d$-generated groups without abelian composition factors.
       We quote \cite[Corollary 13.4]{2011JZP}:
       
       \begin{theorem}[Jaikin-Zapirain and Pyber] \label{thJP}
        There exists a constant $C$ such that the number of $d$-generated finite groups of order
        $n$ without abelian composition factors is at most $n^{Cd}$.
       \end{theorem}
       \vspace{0.1cm}
       
       It is easy to see that, up to replacing $C$, the same bound is true for the number of $d$-generated finite groups
       of order at most $n$ without abelian composition factors.
    
      \begin{proof}[Proof of Theorem \ref{thKN}]
      Arguing as in the proof of Proposition \ref{propRGBAb}, we obtain an inequality of the type
	$$ Rep_n(G) \> \leq \> \phi_d ( \> \> n! \cdot ab_{(n+1)!}(G) \> \> )  \cdot (n! \cdot ab_{(n+1)!}(G))^{d+1} \> , $$
	where
	$\phi_d(n)$ is the number of $d$-generated groups of order at most $n$ and without abelian composition factors.
	From Theorem \ref{thPAGoSG} we have $ab_n(G) \leq n^\alpha$ for some fixed $\alpha>0$.
	Moreover, Theorem \ref{thJP} gives $\phi_d(n) \leq n^{Cd}$ for some absolute constant $C>0$.
	Up to replacing with a larger $\alpha'(\alpha,C)$, the inequality
	$$ Rep_n(G) \leq ( n! \cdot ((n+1)!)^\alpha)^{Cd+d+1} \leq (n!)^{\alpha' d} $$
	is true for every $n \geq 1$.
      \end{proof}
      
     \end{section}

     \vspace{0.2cm}
\section{Group actions} \label{sect5}

Let the group $G$ act transitively on the set $\Omega$.
We count the irreducible representations which appear in the permutation representation,
and measure the abelian sections of $G$ above a stabilizer.

  \begin{definition} \index{$ab_n(G,Y)$, $Rep_n(G,Y)$|textbf} 
 	   Let $G$ be a group, and $Y \leqslant G$. For every $n \geq 1$, we define
 	    $$ ab_n(G,Y) \> := \> \sup_{Y \leqslant H \leqslant G \atop |G:H| \leq n} |H/H'Y| \> , $$ 
 	    where $H$ ranges among the subgroups between $Y$ and $G$
 	    (notice that $H'Y \lhd H$).
 	   Moreover, we define $Rep_n(G,Y)$ as the number of (inequivalent) irreducible $G$-representations
 	    having a non-zero $Y$-fixed vector,
 	   degree at most $n$, and finite image.
 	   \end{definition}
 	   \vspace{0.1cm}
 	    
 	   We shall use $Irr_n(G,Y)$ to denote the set of the irreducible $G$-representations (up to isomorphism)
 	  having a non-zero $Y$-fixed vector, degree at most $n$, and finite image.
 	  Here is a simple connection with the matter of the previous sections.

\begin{lemma}[Reduction to faithful actions] \label{4lemRTNS}
      If $N \leqslant Y$ and $N \lhd G$, then for every $n \geq 1$
     $$ ab_n(G,Y) \> = \> ab_n(G/N,Y/N) , \hspace{0.5cm}
     \mbox{ and }
     \hspace{0.5cm} Rep_n(G,Y) \> = \> Rep_n(G/N,Y/N) . $$ 
     \end{lemma}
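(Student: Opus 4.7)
The plan is to exhibit explicit bijections on both sides using the correspondence induced by the projection $G \twoheadrightarrow G/N$. Because $N \lhd G$ and $N \leq Y \leq H$, the correspondence theorem gives a bijection between subgroups $H$ with $Y \leq H \leq G$ and subgroups $\bar H = H/N$ with $Y/N \leq \bar H \leq G/N$, and this bijection preserves indices: $|G:H| = |(G/N):(H/N)|$. So both equalities will follow once the quantities being counted, term by term, are shown to match under this correspondence.

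For the first equality, I would check that $|H/H'Y| = |(H/N)/(H/N)'(Y/N)|$ for every admissible $H$. The key algebraic identity is
$$(H/N)'(Y/N) \> = \> (H'N)(Y)/N \> = \> H'Y/N,$$
where the first step uses $(H/N)' = H'N/N$ and the second uses $NY = Y$ (since $N \leq Y$). The third isomorphism theorem then gives $(H/N)/(H'Y/N) \cong H/H'Y$, and taking the supremum over $H$ on either side of the correspondence yields the same value for $ab_n$.

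For the second equality, the critical observation is that every $\pi \in Irr_n(G,Y)$ must be trivial on $N$. Let $v \neq 0$ be a $Y$-fixed vector of $\pi$; since $N \leq Y$, it lies in the subspace $V^N$ of $N$-fixed vectors. Because $N \lhd G$, for any $g \in G$ and $n \in N$ one has $\pi(n)\pi(g)v = \pi(g)\pi(g^{-1}ng)v = \pi(g)v$, which shows $V^N$ is $G$-invariant; irreducibility then forces $V^N = V$ and hence $N \leq \ker \pi$. Consequently $\pi$ factors through a representation $\bar\pi$ of $G/N$, and $\pi \mapsto \bar\pi$ preserves irreducibility, degree, finiteness of image, and the presence of a non-zero $Y/N$-fixed vector. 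Inflation along $G \to G/N$ supplies the inverse, giving the desired bijection between $Irr_n(G,Y)$ and $Irr_n(G/N,Y/N)$.

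The only genuinely nontrivial step is the $G$-invariance of $V^N$, which crucially relies on the normality of $N$ in $G$ rather than merely in $Y$; everything else reduces to routine applications of the isomorphism theorems, so I do not anticipate any real obstacle.
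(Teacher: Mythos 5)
Your proposal is correct, and the first equality is handled exactly as in the paper: both proofs reduce to the identity $(H/N)'(Y/N) = H'Y/N$ together with the index-preserving correspondence between subgroups of $G$ containing $N$ and subgroups of $G/N$. For the second equality you take a genuinely different route. The paper argues at the level of induced modules: since $N \leq Y$, the permutation representation $(1_Y)^{\uparrow G}$ is a summand of $(1_N)^{\uparrow G}$, which is the regular representation of $G/N$, so every constituent of $(1_Y)^{\uparrow G}$ factors through $G/N$. You instead argue pointwise inside a single irreducible $\pi$: a non-zero $Y$-fixed vector lies in $V^N$, the computation $\pi(n)\pi(g)v = \pi(g)\pi(g^{-1}ng)v = \pi(g)v$ (using $N \lhd G$) shows $V^N$ is $G$-invariant, and irreducibility then forces $V^N = V$, i.e. $N \leq \ker\pi$. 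Both arguments are clean and roughly the same length; yours is a slice of Clifford theory and is perhaps more self-contained, while the paper's is slicker if one is already thinking in terms of induced characters (which is natural here, since Frobenius reciprocity is used heavily elsewhere in the section). You also make explicit the bookkeeping that the bijection $\pi \leftrightarrow \bar\pi$ preserves degree, finiteness of image, and the $Y$-fixed-vector condition, which the paper leaves implicit; that is a welcome bit of extra care, and I see no gap.
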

     \begin{proof}
    Let us recall that the intermediate subgroups $N \leqslant H \leqslant G$
    are in bijection (via the correspondence theorem) with the subgroups of the quotient $G/N$.
    Let $N \leqslant Y \leqslant H \leqslant G$. We have
     $$ |(H/N):(H/N)'(Y/N)|  = |(H/N):(H'Y/N)| = |H/H'Y| . $$
     Moreover $|(G/N):(H/N)|=|G:H|$, so the proof of the first part follows.\\
     Now the induced representation $(1_Y)^{\uparrow G}$ is contained in $(1_N)^{\uparrow G}$,
     which is just the regular representation of $G/N$.
     So the representations in $Irr_n(G,Y)$ are representations of $G/N$.
     Moreover, $(1_Y)^{\uparrow G}$ is equivalent to $(1_{(Y/N)})^{\uparrow (G/N)}$ when seen in the quotient,
     and the proof follows.
     \end{proof}
     \vspace{0.1cm}
     
     The following is the generalized version of Proposition \ref{propFAbBAb}.
     Essentially the proof is the same, and we leave it to the reader.
     
      \begin{proposition}[Reduction to finite groups]
 	  Let $G$ be an arbitrary group, let $\mathcal{G}$ be the family of its finite quotients, and let $Y \leqslant G$. Then
 	  $$ ab_n(G,Y) \> \geq \> \sup_{G/N \in \mathcal{G}} ab_n \left( \frac{G}{N} , \frac{YN}{N} \right) $$
 	  for every $n \geq 1$.
 	  Equality holds in the following cases:
 	  \begin{itemize}
 	  \item[(i)] $ab_n(G,Y)$ is finite for all $n \geq 1$;
 	  \item[(ii)] $G$ is finitely generated and $\cap_{|G:H|<\infty} H \leqslant Y$.
	\end{itemize} 	   
     \end{proposition}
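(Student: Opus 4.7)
The plan is to mimic the proof of Proposition \ref{3propFAbBAb}, carrying the subgroup $Y$ through each step. \emph{Inequality.} Fix $N \lhd G$ of finite index. By the correspondence theorem, the intermediate subgroups $YN/N \leqslant K \leqslant G/N$ lift bijectively to subgroups $H \leqslant G$ with $YN \leqslant H$ (so in particular $Y \leqslant H$), with $|G/N : K| = |G : H|$. For $K = H/N$ one has $K' \cdot (YN/N) = H'YN/N$, so
\[
\bigl| K / K'(YN/N) \bigr| \> = \> |H : H'YN| \> \leq \> |H : H'Y|,
\]
since $H'Y \leqslant H'YN$. Taking the sup over all such $H$ with $|G:H| \leq n$ gives $ab_n(G/N, YN/N) \leq ab_n(G, Y)$, and then the sup over $N$ yields the displayed inequality.

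\emph{Equality in case (i).} For any $Y \leqslant H \leqslant G$ with $|G : H| \leq n$, the hypothesis makes $|H : H'Y|$ finite, and hence $|G : H'Y| = |G : H| \cdot |H : H'Y|$ is finite too. Set $N := \bigcap_{x \in G} (H'Y)^x$, the normal core of $H'Y$ in $G$. Then $N \lhd G$ has finite index, so $G/N \in \mathcal{G}$, and $N \leqslant H'Y$. The latter forces $H'YN = H'Y$, so the computation of the first step becomes an equality,
\[
\bigl| (H/N) / (H/N)'(YN/N) \bigr| \> = \> |H : H'Y|,
\]
yielding $ab_n(G/N, YN/N) \geq |H/H'Y|$. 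Taking the sup over $H$ supplies the reverse of the inequality.

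\emph{Equality in case (ii).} If $ab_n(G, Y) < \infty$ for every $n \geq 1$, case (i) applies directly. Otherwise, Lemma \ref{3lemSubGrowth} (applied to the finitely generated $G$) gives finitely many subgroups of each bounded index, so $m := \min\{ n : ab_n(G, Y) = \infty \}$ is well-defined and is actually attained by some $H$ with $|G:H| \leq m$ and $|H/H'Y| = \infty$. For $n < m$, case (i) still applies. For $n \geq m$ we have to show $\sup_N ab_m(G/N, YN/N) = \infty$. Since $H$ has finite index in the finitely generated $G$, it is itself finitely generated, hence $H/H'Y$ is a finitely generated abelian group; being infinite, it surjects onto $\mathbb{Z}$ and admits finite cyclic quotients of every order. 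For each $k$, choose $H'Y \leqslant J \leqslant H$ with $|H : J| = k$ and let $N := \bigcap_{x \in G} J^x$; then $N \lhd G$ with $|G : N| \leq |G : J|! < \infty$, and $N \leqslant J$. Since $H'Y \leqslant J$ too, we obtain $H'YN \leqslant J$, whence $|H : H'YN| \geq k$ and so $ab_m(G/N, YN/N) \geq k$. Letting $k \to \infty$ gives the desired infinite sup. The hypothesis $\bigcap_{|G:H| < \infty} H \subseteq Y$ plays the role of residual finiteness in the ungeneralized Proposition \ref{3propFAbBAb}: it guarantees that $(G, Y)$ is faithfully detected by its finite quotients, ruling out pathologies like $G = \mathbb{Q}$, $Y = 0$.

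The main subtlety is bookkeeping: each passage to a quotient $G/N$ must choose $N$ small enough that $H'YN = H'Y$ (so the value $|H/H'Y|$ is preserved in $G/N$), yet normal of finite index in $G$. In case (i) this is done by the normal core of $H'Y$ itself; in case (ii), where $H/H'Y$ is infinite, one first replaces $H'Y$ by finite-index approximations $J$ and only then passes to normal cores.
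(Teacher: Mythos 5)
Your proof is correct and follows exactly the route the paper indicates, namely mimicking Proposition \ref{3propFAbBAb} with $Y$ carried along. The three pieces all check out: the inequality via $H'Y \leqslant H'YN$; equality in (i) by taking $N$ to be the normal core of $H'Y$, which makes $H'YN = H'Y$ so the value is preserved exactly; and equality in (ii) by locating the minimal index $m$ where $ab_m(G,Y) = \infty$ (finitely many subgroups of each index, since $G$ is finitely generated), then using that $H/H'Y$ is a finitely generated infinite abelian group and so has finite quotients of every order, and passing to the normal core of a finite-index $J \supseteq H'Y$ to get $|H:H'YN| \geq |H:J|$. The one remark I'd add: as written, your argument for case (ii) never actually invokes the hypothesis $\bigcap_{|G:H|<\infty} H \subseteq Y$, and indeed it goes through without it (just as, on closer inspection, the residual-finiteness hypothesis in Proposition \ref{3propFAbBAb}(ii) is used only through the automatic residual finiteness of the finitely generated abelian group $H/H'$). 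So your proof in fact establishes a slightly stronger statement than the one asserted; worth noting explicitly rather than attributing a role to the hypothesis that the argument does not actually exploit.
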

      \vspace{0.1cm}
      
      To not borden unnecessarily the discussion, we do not introduce any notation of the type
      $ab_n(\mathcal{G})$ or $Rep_n(\mathcal{G})$ in the generalized setting.
      When $G$ is residually finite and the index of $Y$ is infinite,
     a remarkable difference with the case $Y \lhd G$ is that there are no non-trivial lower bounds
     for $ab_n(G,Y)$ and $Rep_n(G,Y)$ in general.
     This is precisely because there is no such a lower bound for the largest abelian section between $Y$ and $G$,
     nor for the number of representations with a non-trivial $Y$-fixed vector.

     \begin{example}
     Let $G$ be a $2$-transitive permutation group, and let $Y \leqslant G$ be the stabilizer of a point.
    Then the permutation representation is composed by two irreducible $G$-representations,
    one being the trivial representation.
    Moreover, since $Y$ is maximal and not normal, there is no non-trivial abelian section between $Y$ and $G$.
    It follows that
    $ab_n(G,Y) \leq 1$ and $Rep_n(G,Y) \leq 2$ for all $n \geq 1$.
     \end{example}

     \begin{lemma}
     Let $G$ be residually finite, and let $|G:Y|=\infty$.
	Then $ab_n(G,Y)$ is bounded by an absolute constant if and only if
	$|N_G(H):H|$ is bounded by an absolute constant, for every $Y < H \leqslant G$ with $|G:H|<\infty$.
	In particular, $ab_n(G,Y)=1$ for all $n \geq 1$ if and only if all the subgroups of finite index containing $Y$ are self-normalizing.
     \end{lemma}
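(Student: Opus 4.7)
The plan is to prove both directions of the equivalence using one shared observation: for any $Y \leqslant H \leqslant G$, the subgroup $H'Y$ is normal in $H$, because $H'Y/H'$ is the image of $Y$ in the abelian group $H/H'$, hence a subgroup. Consequently $H \leqslant N_G(H'Y)$, and this tight link between abelian sections of $H$ above $Y$ and normalizer indices is what drives both implications.

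The easy direction (boundedness of $|N_G(H):H|$ implies boundedness of $ab_n(G,Y)$) follows immediately from this observation. Given any $H$ with $Y \leqslant H$ and $|G:H| \leqslant n$, set $L := H'Y$. Then $Y \leqslant L$, $L$ has finite index in $G$ (since $L \leqslant H$), and $H \leqslant N_G(L)$, so $|H/H'Y| = |H:L| \leqslant |N_G(L):L|$; the hypothesis bounds the right-hand side by an absolute constant.

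The harder direction is the main obstacle. Assuming $ab_n(G,Y) \leqslant C$ for every $n$, take any finite-index $H$ with $Y \leqslant H$ and set $K := N_G(H)$. In the finite group $K/H$, choose a maximal abelian subgroup $A/H$; by maximality $C_{K/H}(A/H) = A/H$, so $K/A$ embeds into $\operatorname{Aut}(A/H)$ and
$$|K:H| \leqslant |A/H| \cdot |\operatorname{Aut}(A/H)|.$$
Now $A$ is intermediate of finite index with $A/H$ abelian, so $A' \leqslant H$ and therefore $A'Y \leqslant H$; this gives $|A:A'Y| \geqslant |A:H| = |A/H|$, whence $|A/H| \leqslant ab_{|G:A|}(G,Y) \leqslant C$. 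Combining the two estimates bounds $|N_G(H):H|$ by a function of $C$ alone. The real difficulty here is that merely controlling the abelianization of $K/H$ would not suffice, since a priori $K/H$ could be perfect; passing through a maximal abelian subgroup is what converts the abstract structural hypothesis into an abelian section actually seen by $ab_n$.

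The "in particular" statement is obtained by specializing to $C = 1$. If $Y$ is weakly abnormal then $H'Y$ is self-normalizing, so $H \leqslant N_G(H'Y) = H'Y$ forces $H = H'Y$ and $ab_n(G,Y) = 1$. Conversely, with $C = 1$ the forward argument yields $|A/H| = 1$, so the finite group $K/H$ has no non-trivial abelian subgroup and must therefore be trivial; hence $N_G(H) = H$ for every finite-index $H \geqslant Y$. To promote this to an arbitrary intermediate subgroup $L$ with $Y \leqslant L \leqslant G$, one invokes residual finiteness: if $x \in N_G(L) \setminus L$, one locates a finite-index normal $M \lhd G$ with $x \notin LM$, giving a finite-index intermediate subgroup $LM$ that $x$ normalizes but does not belong to, which contradicts the case already treated.
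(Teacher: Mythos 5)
The easy implication and the $C=1$ analysis are handled correctly: the reduction via $L := H'Y$, using $H'Y \lhd H$ so that $|H/H'Y| = |H:L| \leqslant |N_G(L):L|$, is exactly the right move, and observing that a nontrivial finite group always has a nontrivial abelian subgroup correctly disposes of the case $C=1$ for finite-index $H$.

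The hard implication contains a genuine error. After choosing a maximal abelian subgroup $A/H$ of $K/H := N_G(H)/H$, you correctly get $C_{K/H}(A/H) = A/H$, but then assert that $K/A$ embeds into $\mathrm{Aut}(A/H)$. The $N/C$ theorem only gives an embedding $N_{K/H}(A/H)/C_{K/H}(A/H) \hookrightarrow \mathrm{Aut}(A/H)$, and a maximal abelian subgroup is in general \emph{not} normal in $K/H$, so $K/A$ is not even a group. Concretely, take $K/H \cong S_4$ and $A/H$ a subgroup of order $3$: this is maximal abelian and self-centralizing, yet $|A/H|\cdot|\mathrm{Aut}(A/H)| = 6 < 24 = |K/H|$. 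Your estimate therefore bounds $|N_{K/H}(A/H)|$, not $|K/H|$, and the argument does not close. The fact you were reaching for --- that $|K/H|$ is bounded by a function of the size of its largest abelian section --- is a non-elementary theorem, and the paper invokes it as a black box; it follows from the estimate from \cite{2021SabB} already used in the proof of Theorem~\ref{3propLBAbG}, or from Pyber \cite{1997Pyber}. There is no quick maximal-abelian-subgroup shortcut around it.

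Separately, your promotion from finite-index $H$ to an arbitrary intermediate $L$ in the ``in particular'' direction does not follow from residual finiteness alone. To find a finite-index normal $M \lhd G$ with $x \notin LM$ you need $L$ to be closed in the profinite topology, i.e.\ subgroup separability, which is strictly stronger than residual finiteness of $G$. What your argument does establish is $x \in \bigcap_M LM$, the profinite closure of $L$, but that need not equal $L$. The paper's one-line proof of this part also tacitly restricts to finite-index subgroups, so this is a gap worth flagging explicitly rather than claiming to have filled.
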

     \begin{proof}
     Let $\theta \geq 1$ such that $ab_n(G,Y) \leq \theta$ for all $n \geq 1$.
     This means that every abelian section (of finite index) between $Y$ and $G$ has size at most $\theta$.
     Now, the size of a finite group is bounded by a function on the size of the largest abelian section (see Theorem \ref{thProc}, for example).
     Then the size of {\itshape every} section of finite index between $Y$ and $G$ is bounded by a function on $\theta$. 
     But this means exactly that the set $\{|N_G(H):H| \}_{Y < H \leqslant G, \\ |G:H|<\infty}$ is bounded by a function on $\theta$.\\
     The proof of the second part follows, because every non-trivial finite group has a non-trivial abelian section.
     \end{proof}
     \vspace{0.1cm}
     
      The following are the generalized versions of Lemmas \ref{lemAbGSub} and \ref{lemRepGSub}.
     
     \begin{lemma}[Relative $ab_n$ and subgroups] \label{4lemAbGSub}
     Let $Y \leqslant H \leqslant G$, $|G:H|<\infty$, and $n \geq 1$. We have
     
     \begin{itemize}
     \item[(i)] $ab_n(H,Y) \leq ab_{|G:H|n}(G,Y)$;
        \item[(ii)] $ab_n(G,Y) \leq |G:H| \cdot ab_n(H,Y)$.
     \end{itemize}
     
     \end{lemma}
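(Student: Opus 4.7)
My plan is to follow the template of the proof of Lemma \ref{3lemAbGSub}, inserting $Y$ where appropriate and checking that the normality and containment conditions used there remain valid in the relative setting.

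For part (i), I would simply note that any subgroup $J$ with $Y \leqslant J \leqslant H$ and $|H:J|\leq n$ is automatically a subgroup of $G$ satisfying $Y\leqslant J \leqslant G$ with $|G:J|=|G:H|\cdot|H:J| \leq |G:H|n$. Hence $|J/J'Y|$ contributes to $ab_{|G:H|n}(G,Y)$, and taking the supremum over all such $J$ yields (i).

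For part (ii), I would fix $J$ with $Y \leqslant J \leqslant G$ and $|G:J|\leq n$, and look at the intermediate subgroup $H\cap J$, which contains $Y$ and satisfies $|H:H\cap J|=|HJ:J|\leq |G:J|\leq n$. The key decomposition is
\[
|J:J'Y| \> = \> |J:(H\cap J)J'Y| \cdot |(H\cap J)J'Y:J'Y|.
\]
Since $Y\leqslant H\cap J$, the first factor equals $|J:(H\cap J)J'|$, which is bounded by $|J:H\cap J|\leq |G:H|$. For the second factor, the second isomorphism theorem (applied using $J'Y\lhd J$) gives $|(H\cap J)J'Y:J'Y|=|(H\cap J):(H\cap J)\cap J'Y|$. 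Since $(H\cap J)'\leqslant J'$ and $Y\leqslant J'Y$, one has $(H\cap J)'Y\leqslant (H\cap J)\cap J'Y$, so this factor is at most $|(H\cap J):(H\cap J)'Y|$, which in turn is at most $ab_n(H,Y)$ because $Y\leqslant H\cap J\leqslant H$ with index $|H:H\cap J|\leq n$. Combining the two bounds gives $|J/J'Y|\leq |G:H|\cdot ab_n(H,Y)$, and part (ii) follows by taking the supremum over $J$.

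I do not expect a serious obstacle: the only delicate point is keeping track of which subgroups contain $Y$ so that the quantity $|\,\cdot\,/(\,\cdot\,)'Y|$ is meaningful. The crucial observation that makes everything work is that $Y\leqslant H\cap J$ automatically, which both guarantees $(H\cap J)'Y$ is a well-defined normal subgroup of $H\cap J$ and collapses $(H\cap J)J'Y$ to $(H\cap J)J'$, allowing the index $|G:H|$ to appear in place of the more cumbersome factor one would otherwise get.
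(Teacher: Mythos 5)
Your proof is correct and follows essentially the same route as the paper's: part (i) by the index-multiplicativity observation, and part (ii) via the decomposition of $|J:J'Y|$ through $(H\cap J)J'$, the second isomorphism theorem with the normal subgroup $J'Y\lhd J$, and the containment $(H\cap J)'Y\leqslant (H\cap J)\cap J'Y$. The only cosmetic difference is that you write the chain through $(H\cap J)J'Y$ and then note its collapse to $(H\cap J)J'$, whereas the paper writes the collapsed form directly.
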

     \begin{proof}
     Choose any $Y \leqslant J \leqslant H$ with $|H:J| \leq n$.
     Since $J$ is also a subgroup of $G$, and $|G:J|=|G:H||H:J|$, (i) follows.\\
     To prove (ii), choose any $Y \leqslant J \leqslant G$ with $|G:J| \leq n$.
     Since $Y \leqslant H \cap J$, we have
     \begin{align*}
|J/J'Y| & \> = \> 
|J:J'(H \cap J)| \cdot |J'(H \cap J):J'Y| \\ & \> = \> 
|J:J'(H \cap J)| \cdot |(J'Y)(H \cap J) : J'Y| \\ & \> \leq \> 
|J:H \cap J| \cdot |H \cap J : H \cap (J'Y)| \> .
\end{align*}
     Now $(H \cap J)'Y \leqslant H \cap (J'Y)$, and so
     $$ |J/J'Y| \leq |G:H| \cdot |H \cap J : (H \cap J)'Y| . $$
     Since $|H:H \cap J| \leq |G:J|$, the proof follows.
     \end{proof}
     \vspace{0.0cm}
     
      \begin{lemma}[Relative $Rep_n$ and subgroups] \label{4lemRepGSub}
     Let $G$ be a finite group, $Y \leqslant H \leqslant G$, and $n \geq 1$. We have
     
     \begin{itemize}
     \item[(i)] $Rep_n(H,Y) \leq |G:H| \cdot Rep_{n|G:H|}(G,Y)$;
     \item[(ii)] $Rep_n(G,Y) \leq |G:H| \cdot Rep_n(H,Y)$.
     \end{itemize}
     
     \end{lemma}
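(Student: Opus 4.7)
The plan is to mimic the proof of Lemma \ref{3lemRepGSub} (the case $Y=1$), maintaining the same maps and Frobenius-reciprocity fibre estimates, while checking that the extra condition ``has a non-zero $Y$-fixed vector'' is preserved at every step. Recall that $\pi$ has a non-zero $Y$-fixed vector if and only if $1_Y$ appears in $\pi_{\downarrow Y}$, equivalently (by Frobenius reciprocity) if and only if $\pi$ is a constituent of $(1_Y)^{\uparrow G}$.

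For (i), I would define a map $\phi: Irr_n(H,Y) \to Irr_{n|G:H|}(G,Y)$ by choosing, for each $\rho$, some irreducible constituent $\phi(\rho)$ of the induced representation $\rho^{\uparrow G}$; its degree is at most $|G:H|\dim(\rho)\leq n|G:H|$, exactly as in the old proof. The new point to verify is that $\phi(\rho)$ sits in $Irr(G,Y)$. This is automatic: since $\rho \in Irr_n(H,Y)$, Frobenius reciprocity gives that $\rho$ is a constituent of $(1_Y)^{\uparrow H}$, hence $\rho^{\uparrow G}$ is a subrepresentation of $((1_Y)^{\uparrow H})^{\uparrow G} = (1_Y)^{\uparrow G}$ by transitivity of induction, and therefore every irreducible constituent of $\rho^{\uparrow G}$ has a non-zero $Y$-fixed vector. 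The fibre bound $|\phi^{-1}(\sigma)|\leq |G:H|$ is copied verbatim: if $\sigma = \phi(\rho)$ is a constituent of $\pi^{\uparrow G}$ for some $\pi$ in the fibre of minimal degree, then
\[
|\phi^{-1}(\sigma)|\cdot \dim(\pi) \leq \dim(\sigma_{\downarrow H}) = \dim(\sigma) \leq \dim(\pi^{\uparrow G}) = |G:H|\dim(\pi).
\]

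For (ii), I would define $\varphi: Irr_n(G,Y) \to Irr_n(H,Y)$ by sending $\rho$ to an irreducible constituent of $\rho_{\downarrow H}$ that still carries a non-zero $Y$-fixed vector. Such a constituent exists: writing $\rho_{\downarrow H} = \bigoplus V_i$ as a sum of $H$-irreducibles and taking $Y$-invariants yields $V^Y = \bigoplus V_i^Y$, and $V^Y\neq 0$ forces some $V_i^Y \neq 0$, so we may set $\varphi(\rho)$ to be the irreducible $H$-representation afforded by such a $V_i$. The fibre bound $|\varphi^{-1}(\sigma)|\leq |G:H|$ is then exactly the Frobenius-reciprocity argument in Lemma \ref{3lemRepGSub}, using that $\sigma$ is a constituent of $\rho_{\downarrow H}$ iff $\rho$ is a constituent of $\sigma^{\uparrow G}$.

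There is no real obstacle beyond the bookkeeping above: the subtlety is conceptual rather than technical, namely noticing that induction of $Y$-fixed vectors works automatically via $(1_Y)^{\uparrow H}\leq (1_Y)^{\uparrow G}$, while restriction of $Y$-fixed vectors works because the $Y$-invariant subspace of $\rho_{\downarrow H}$ must meet at least one $H$-irreducible summand. Once those two points are made, the rest is a transcription of the argument in Lemma \ref{3lemRepGSub}, so I would streamline the write-up by saying ``proceed as in the proof of Lemma \ref{3lemRepGSub}, observing that the maps $\phi$ and $\varphi$ send $Y$-fixed-vector representations to $Y$-fixed-vector representations for the reasons above.''
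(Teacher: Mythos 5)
Your proof is correct and follows the paper's strategy exactly: reuse the maps $\phi$ and $\varphi$ from Lemma \ref{3lemRepGSub} and check that they land in $Irr(\cdot,Y)$, with the fibre bounds carrying over verbatim. Your two verifications --- transitivity of induction $((1_Y)^{\uparrow H})^{\uparrow G}=(1_Y)^{\uparrow G}$ for (i), and the direct-sum decomposition $V^Y=\bigoplus V_i^Y$ for (ii) --- are mild cosmetic variants of the paper's, which instead chains three applications of Frobenius reciprocity through $\phi(\rho)_{\downarrow Y}$ for (i) and computes the character inner product $\langle \chi_{\downarrow H},(1_Y)^{\uparrow H}\rangle=\langle \chi,(1_Y)^{\uparrow G}\rangle\geq 1$ for (ii).
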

     \begin{proof}
     Let us recover the proof of Lemma \ref{lemRepGSub}.
     Thus, it rests to prove that the two functions $\phi: Irr_n(H) \rightarrow Irr_{n|G:H|}(G)$ and $\varphi:Irr_n(G) \rightarrow Irr(H)$,
     as defined in that proof, map into $Irr_{n|G:H|}(G,Y)$ and $Irr_n(H,Y)$ when restricted to $Irr_n(H,Y)$ and $Irr_n(G,Y)$ respectively.\\
     For (i), let $\rho \in Irr_n(H,Y)$, and let $\phi(\rho)$ be an irreducible constituent of the induced representation $\rho^{\uparrow G}$.
     By Frobenius reciprocity, $\rho$ is a constituent of the restriction $\phi(\rho)_{\downarrow H}$,
     and so $\rho_{\downarrow Y}$ is contained in $\phi(\rho)_{\downarrow Y}$.
     But, again from Frobenius reciprocity, $\rho_{\downarrow Y}$ contains the trivial $Y$-representation.
     This implies that $\phi(\rho)_{\downarrow Y}$ contains the trivial $Y$-representation and,
     by a last application of Frobenius reciprocity,
     that $\phi(\rho)$ is a constituent of $(1_Y)^{\uparrow G}$ as desired.\\
     For (ii), let $\rho \in Irr_n(G,Y)$.
     We remark that, to construct $\varphi$,
     we can {\itshape choose} $\varphi(\rho)$ to be any irreducible constituent of $\rho_{\downarrow H}$.
     We use this fact to force $\varphi(\rho)$ to be also an irreducible constituent of $(1_Y)^{\uparrow H}$.
     This can always be done, in fact, let $\chi:=trace(\rho)$ be the character of $\rho$.
     Let us use $(1_Y)^{\uparrow H}$ and $(1_Y)^{\uparrow G}$ to denote both the permutation representations and the correspondent characters.
     If $\langle \cdot , \cdot \rangle_G$ denotes the standard inner product, by Frobenius reciprocity we have
     $$ \langle \chi_{\downarrow H} , (1_Y)^{\uparrow H} \rangle_H = \langle \chi , ((1_Y)^{\uparrow H})^{\uparrow G} \rangle_G =
      \langle \chi , (1_Y)^{\uparrow G} \rangle_G \geq 1 , $$
      where the last inequality follows because $\rho \in Irr(G,Y)$ by hypothesis.
      Thus, $\rho_{\downarrow H}$ and $(1_Y)^{\uparrow H}$ share some irreducible components, and the proof is complete.
     \end{proof}
     \vspace{0.1cm}
     
   When compared to the strong relationship between $ab_n(G)$ and $Rep_n(G)$
   (which is settled by (\ref{eqLM}) and Proposition \ref{propRGBAb}),
   the one between $ab_n(G,Y)$ and $Rep_n(G,Y)$ is quite milder.
    
    \begin{proposition} \label{4propRAbGRG1}
    Let $Y \leqslant G$. The following hold
    \begin{itemize}
    \item[(i)] $ab_1(G,Y) = Rep_1(G,Y)$;
    \item[(ii)] $ab_n(G,Y) \leq n \cdot Rep_n(G,Y)$ for every $n \geq 1$.
    \end{itemize}
    \end{proposition}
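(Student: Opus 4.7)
The plan is to verify (i) by direct inspection and then derive (ii) from (i) via an inducing--characters argument, essentially specializing Lemma~\ref{4lemRepGSub}(i) to $1$-dimensional source representations.

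For (i), the supremum in $ab_1(G,Y)$ has only one admissible term, $H=G$, giving $ab_1(G,Y)=|G/G'Y|$. On the representation side, a $1$-dimensional representation $\rho$ of $G$ has a nonzero $Y$-fixed vector if and only if $\rho|_Y$ is trivial, and since $\rho$ automatically factors through the abelianization, this is equivalent to saying that $\rho$ descends to a character of the abelian group $G/G'Y$. The finite-image requirement then cuts out the finite-order characters of this group. When $G/G'Y$ is finite, Pontryagin duality counts exactly $|G/G'Y|$ of them; when it is infinite both quantities are $\infty$, since every infinite abelian group admits infinitely many finite-order characters.

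For (ii), I fix any intermediate $Y\leq H\leq G$ with $|G:H|\leq n$ and aim at $|H/H'Y|\leq n\cdot Rep_n(G,Y)$; taking the supremum over $H$ then gives the inequality. By (i) applied to the pair $(H,Y)$, the quantity $|H/H'Y|$ equals the number of $1$-dimensional characters of $H$ that are trivial on $Y$ and have finite image. For each such $\chi$, the induced representation $\chi^{\uparrow G}$ has degree $|G:H|\leq n$, has finite image (pass to a normal core inside $G$ of $\ker\chi$, which has finite index in $G$), and, because $\chi|_Y = 1_Y$, sits inside $(1_Y)^{\uparrow G}$ by transitivity of induction together with Frobenius reciprocity. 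Every irreducible constituent of $\chi^{\uparrow G}$ therefore lies in $Irr_n(G,Y)$, and I set $\phi(\chi)$ to be any such constituent.

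The last step is a fiber bound for $\phi$: if $\phi(\chi)=\pi$ for some $\pi\in Irr_n(G,Y)$, then by Frobenius reciprocity $\chi$ appears as an irreducible component of $\pi_{\downarrow H}$, and since each such $\chi$ is $1$-dimensional there can be at most $\dim\pi\leq n$ of them. Hence $|H/H'Y|\leq n\cdot|Irr_n(G,Y)|=n\cdot Rep_n(G,Y)$, as needed. The only mildly delicate point, which I expect to be the main obstacle, is keeping track of the finite-image hypothesis through both parts --- in particular, ensuring that it survives the induction step in (ii) and does not spoil the identification in (i) when $H/H'Y$ happens to be infinite; both issues are handled uniformly by the normal-core construction used above.
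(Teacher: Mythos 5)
Your approach is essentially the same as the paper's. For (i) the paper reduces via Lemma~\ref{4lemRTNS} to $Rep_1(G/G'Y)=|G/G'Y|$, while you observe directly that $Rep_1(G,Y)$ counts the finite-order characters of $G/G'Y$; these are the same computation. For (ii) the paper simply cites Lemma~\ref{4lemRepGSub}(i) at $n=1$, i.e. $|H/H'Y|=Rep_1(H,Y)\leq|G:H|\cdot Rep_{|G:H|}(G,Y)\leq n\cdot Rep_n(G,Y)$; your fiber-counting argument via Frobenius reciprocity is precisely the proof of that lemma specialized to a $1$-dimensional source, so the substance is identical.

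One small but real slip: the claim that ``every infinite abelian group admits infinitely many finite-order characters'' is false --- $\mathbb{Q}$ has only the trivial one, since it has no proper subgroups of finite index. So if $G/G'Y$ were an infinite abelian group with no finite quotients, (i) would actually fail ($ab_1=\infty$, $Rep_1=1$). The paper's own proof passes over this case silently (it asserts $Rep_1(G/G'Y)=|G/G'Y|$ ``because $G/G'Y$ is abelian''), so this is a defect in the generality of the statement rather than something unique to your argument, but you should not patch it with a false assertion; either restrict to the case $|G/G'Y|<\infty$ or note that both sides are infinite exactly when $G/G'Y$ has arbitrarily large finite quotients.
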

    \begin{proof}
    It follows from the definition that $ab_1(G,Y)=|G/G'Y|$.
    Now we notice that 
    $$ Rep_1(G,Y) = Rep_1 \left( \frac{G}{G'},\frac{G'Y}{G'} \right) , $$
    and since $(G'Y/G') \lhd (G/G')$, via Lemma \ref{4lemRTNS},
    $$ Rep_1 \left( \frac{G}{G'},\frac{G'Y}{G'} \right) = Rep_1 \left( \frac{(G/G')}{(G'Y/G')} \right) = Rep_1(G/G'Y) = |G/G'Y| , $$
    because $G/G'Y$ is abelian.\\
    Now let $n \geq 1$, and let $Y \leqslant H \leqslant G$ be any intermediate subgroup with $|G:H| \leq n$.
    Lemma \ref{4lemRepGSub} (i) applied to $n=1$ provides
     \begin{align*}
|H/H'Y|  & \> = \> 
ab_1(H,Y)  \\ & \> = \> 
 Rep_{_1}(H,Y) \\ & \> \leq \> 
|G:H| \cdot Rep_{_{|G:H|}}(G,Y) \> ,
\end{align*}
     and (ii) follows.
    \end{proof}
    \vspace{0.1cm}
    
    Therefore a bound on $Rep_n(G,Y)$ gives a bound on $ab_n(G,Y)$.
     The converse is not true, highlighting the absence of a generalized version of
      Proposition \ref{propRGBAb} (in the proof, the size of the subgroup $\rho(A_\rho)$,
   which is provided by the Jordan theorem, cannot be bounded by a function on $ab_n(G,Y)$).
      Indeed, a {\itshape robust} notion of BAb does not exist in the generalized setting.
      
   \begin{example} \label{4exDihedral}
   Let $p \geq 3$ be a prime, and let $G_p:=D_{2p}$ be a dihedral group which acts on the $p$-gon $\Omega_p$.
   Let $Y_p \leqslant G_p$ be the stabilizer of a point.
   Since $Y_p$ is maximal and not normal, we have $ab_n(G_p,Y_p)=1$ for all $n \geq 1$.
   On the other hand, all the irreducible representations of a dihedral group have degree $1$ or $2$.
   It follows that $Rep_2(G_p,Y_p) = \frac{p-1}{2}$ can be arbitrarily large.
   \end{example}


\vspace{0.2cm}
\thebibliography{10}

	\bibitem{2016BMV} B. Baumeister, A. Maroti, H.P. Tong-Viet, \textit{Finite groups have more conjugacy classes},
	Forum Mathematicum \textbf{29 (2)} (2016), 1-17.

\bibitem{2007Collins} M. Collins, \textit{On Jordan's theorem for complex linear groups},
Journal of Group Theory \textbf{10} (2007), 411-423.

  \bibitem{2010Craven} D.A. Craven, \textit{Lower bounds for representation growth},  
	Journal of Group Theory \textbf{13} (2010), 873-890.
	
	\bibitem{1967Dixon} J.D. Dixon, \textit{The Fitting subgroup of a linear solvable group},
Journal of Australian Mathematical society \textbf{7} (1967), 417-424.

\bibitem{1976Isaacs} I.M. Isaacs \textit{Character Theory of Finite Groups},
Academic Press, New York (1976).

\bibitem{2011JZP} A. Jaikin-Zapirain, L. Pyber, \textit{Random generation of finite and profinite groups and group enumeration},
	Annals of Mathematics \textbf{173} (2011), 769-814.

\bibitem{1878Jordan} C. Jordan, \textit{Memoire sur les equations differentielle lineaire a integrale algebrique},
J. Reine Angew. Math. \textbf{84} (1878), 89-215.

\bibitem{2006KN} M. Kassabov, N. Nikolov, \textit{Cartesian products as profinite completions},
International Mathematics Research Notices (2006), 1-17.

 \bibitem{2013Klopsch} B. Klopsch, \textit{Representation growth and representation zeta functions of groups},
Note di Matematica \textbf{33 (1)} (2013) 107-120.

     \bibitem{1974LS} V. Landazuri, G.M. Seitz, \textit{On the minimal degrees of projective representations
	of the finite Chevalley groups},
	Journal of Algebra \textbf{32} (1974), 418-443.
	
	 \bibitem{2005LS} M. Liebeck, A. Shalev, \textit{Fuchsian groups, finite simple groups and representation varieties},
	Inventiones Mathematicae \textbf{159} (2005), 317-367.
 	
	\bibitem{2001Lub} A. Lubotzky, \textit{Enumerating boundedly generated finite groups},
	 Journal of Algebra \textbf{238} (2001), 194-199.
	
    \bibitem{2004LM} A. Lubotzky, B. Martin, \textit{Polynomial representation growth and the congruence subgroup problem},
	Israel Journal of Mathematics \textbf{144} (2004), 293-316.
  
     \bibitem{2002LS} A. Lubotzky, D. Segal, \textit{Subgroup Growth}, Progress in Mathematics (2002).
     
 \bibitem{1997Pyber} L. Pyber, \textit{How abelian is a finite group?},
 The Mathematics of Paul Erd\H{o}s (1997), 372-384.
   
   \bibitem{2021SabA} L. Sabatini, \textit{Nilpotent subgroups of class $2$ in finite groups},
	Proceedings of the American Mathematical Society \textbf{150 (8)} (2022), 3241-3244.
   
    \bibitem{2021SabB} L. Sabatini, \textit{Abelian sections of the symmetric groups with respect to their index},
	Archiv der Mathematik \textbf{118 (1)} (2022), 3-12.
	
\bibitem{1978Wie} J. Wiegold, \textit{Growth sequences of finite groups III},
	Journal of the Australian Mathematical Society \textbf{25 (A)} (1978), 142-144.
	
	\vspace{1cm}

\end{document}